\newcommand\deq{\mathrel{\stackrel{\makebox[0pt]{\mbox{\normalfont\tiny def}}}{=}}}
\let\phi\varphi
\theoremstyle{plain}
\newtheorem{theorem}{Theorem}[section]
\newtheorem{lemma}[theorem]{Lemma}
\newtheorem{proposition}[theorem]{Proposition}
\newtheorem{corollary}[theorem]{Corollary}
\theoremstyle{remark}
\newtheorem{remark}{Remark}
\theoremstyle{definition}
\newtheorem{definition}[theorem]{Definition}
\newtheorem{example}[theorem]{Example}
\newtheorem*{notation*}{Notation}
\def\hl#1{{\color{NavyBlue}#1}}
\lstdefinelanguage{GAP}{%
 morekeywords={%
 Assert,Info,IsBound,QUIT,%
 TryNextMethod,Unbind,and,break,%
 continue,do,elif,%
 else,end,false,fi,for,%
 function,if,in,local,%
 mod,not,od,or,%
 quit,rec,repeat,return,%
 then,true,until,while%
 },%
 sensitive,%
 morecomment=[l]\#,%
 morestring=[b]",%
 morestring=[b]',%
}[keywords,comments,strings]
\let \l \lambda
\let \hat \widehat
\newcommand{\listP}{\l = (\l_1,\l_2, \dots, \l_t)}
\newcommand{\missP}{\mu_1 < \mu_2 < \dots < \mu_m}
\newcommand{\Ml}{\mathcal M_\l}
\newcommand{\pt}{\widetilde{\pi_n}}
\newcommand{\al}{\alpha}
\newcommand{\mup}[1]{\widetilde{\mathbb{U}}_{#1}}
\newcommand{\up}{\mathbb{U}}
\newcommand{\bl}{\bullet}
\newcommand{\mex}[1]{{\mathrm{mex}}(#1)}
\newcommand{\dist}{\mathbb D}
\newcommand{\mA}{\mathcal{A}}
\newcommand{\mB}{\mathcal{B}}
\newcommand{\mC}{\mathcal{C}}
\newcommand{\mD}{\mathcal{D}}
\newcommand{\mdA}[1]{\mathcal{A}^*_{#1}}
\newcommand{\mdB}[1]{\mathcal{B}^*_{#1}}
\newcommand{\mdC}[1]{\mathcal{C}^*_{#1}}
\newcommand{\mdD}[1]{\mathcal{D}^*_{#1}}
\begin{document}
\title[On the maximal part in unrefinable partitions of triangular numbers]{On the maximal part in unrefinable partitions of triangular numbers} 
 \author{Riccardo Aragona$^1$, Lorenzo Campioni$^1$, Roberto Civino$^1$}
 
 \address{$^1$
 DISIM - Universit\`a degli Studi dell'Aquila, Italy}    
 \author{Massimo Lauria$^2$}
 \address{$^2$ Dipartimento di Scienze Statistiche - Sapienza, Università di Roma, Italy}    

\email{riccardo.aragona@univaq.it, roberto.civino@univaq.it, lorenzo.campioni1@graduate.univaq.it}
\email{ massimo.lauria@uniroma1.it}
\date{} \thanks{R. Aragona and R. Civino are members of INdAM-GNSAGA
 (Italy). R. Civino is partially funded by the Centre of excellence
 ExEMERGE at the University of L'Aquila.}

\subjclass[2010]{11P81, 05A17, 05A19} \keywords{Unrefinable partitions, partitions into distinct parts, triangular numbers, minimal excludant, bijective proof.}

\begin{abstract}
A partition into distinct parts is refinable if one of its parts $a$ can be replaced by
two different integers which do not belong to the partition and whose sum is $a$, and it is unrefinable otherwise. 
Clearly, 
the condition of being unrefinable imposes on the partition a non-trivial limitation on the size of the largest part and on the possible distributions of the parts.
We prove a $O(n^{1/2})$-upper bound for the largest part in an unrefinable partition of $n$, and we call maximal those which reach
the bound. We show a complete classification of maximal unrefinable partitions for triangular numbers, proving that if $n$ is even there exists only
one maximal unrefinable partition of $n(n+1)/2$, and that if $n$ is odd the number of such partitions equals the number of partitions
of $\lceil n/2\rceil$ into distinct parts. In the second case, an explicit bijection is provided.
\end{abstract}

\maketitle


\section{Introduction}
Integer partitions into distinct parts may appear in several areas of mathematics, sometimes unexpectedly. For example, 
they have been recently shown to be linked to the set of generators of groups in  a group-theoretical problem related 
to cryptography~\cite{Aragona2019,aragona2021rigid}. In particular, Aragona et al.\ showed that the generators of a given group are linked to partitions into distinct parts
which satisfy a condition of \emph{non-refinability}~\cite{aragona2021unrefinable} together with a condition on the minimal excludant.
This motivates us to investigate some combinatorial aspects of \emph{unrefinable partitions}, i.e.\ those in which no
part can be written as the sum of 
two different integers which do not belong to the partition, which to our 
knowledge have not been investigated so far (cf.~the On-Line Encyclopedia of Integer Sequences for the first values~\cite[\url{https://oeis.org/A179009}]{OEIS}).

Computational results suggest that the maximal part in an unrefinable partition of $n$ is approximatively $\sqrt{n}$. 
In this paper, we first prove a matching upper bound for the maximal part and then we define \emph{maximal unrefinable partitions} as those which reach the bound.
As a main contribution, we provide a complete classification of maximal unrefinable partitions for triangular numbers.
We constructively prove, denoting by $T_n$ the $n$-th triangular number, that for even $n$ there exists exactly one maximal unrefinable partition of $T_n$.
For  odd $n$, we obtain a lower bound for the minimal excludant for the maximal unrefinable partitions of $T_n$, defined to be the least integers which is not a part~\cite{fraenkel2015harnessing} and  which has been  investigated also recently by other authors~\cite{andrews2019,Hopkins2022}.
The knowledge of a bound on the minimal excludant, among other considerations, allows us to show an explicit bijection between the set of the maximal unrefinable partitions of $T_n$ and the set of partitions of $\lceil n/2\rceil$ into distinct parts in the classical sense~\cite{andrews1998theory}.\\

The remainder of the paper is organized as follows: in Sec.~\ref{sec:prel} we introduce the notation and define unrefinable partitions.
In Sec.~\ref{sec:bounds} we prove two upper bounds for the maximal part in an unrefinable partition of $n$, distinguishing the case 
when $n$ is a triangular number and when it is not. The classification theorem, i.e.\ Theorem~\ref{th:maxevenodd}, is proved in Sec.~\ref{sec:maximal}, which also contains the
result on triangular numbers of an even number. The odd case is  developed in Sec~\ref{sec:odd}, which concludes the paper. In particular, 
we show in Theorem~\ref{magari} a bijective proof that the number of maximal unrefinable partitions of $T_n$  equals the number of partitions
of $\lceil n/2\rceil$ into distinct parts. 


\section{Preliminaries}\label{sec:prel}
Let $N \in \mathbb N$. A \emph{partition} of $N$ is a finite sequence $\l = (\l_1,\l_2, \dots, \l_t)$ of positive integers such that $\l_1\leq \l_2\leq  \dots \leq \l_t$ and $\sum_{i=1}^{t}\l_i= N$. 
When $\l$ is a partition of $N$ we write $\l \vdash N$. 
Each $\l_i$ is called a \emph{part} of the partition $\l$ and we call $\l_t$ its \emph{maximal part}. 
We denote by $(\l_1,\l_2, \dots, \l_{i-1},\hat{\l_i}, \l_{i+1}, \dots, \l_t)$ the partition 
$(\l_1,\l_2, \dots, \l_{i-1}, \l_{i+1}, \dots, \l_t)$ where the part $\l_i$ is removed.

The partition $\listP$ is a \emph{partition into distinct parts} if $\l_1 < \l_2 < \dots < \l_t$ and $t \geq 2$, i.e.\ if each part appears exactly once. The set $\dist_N$ denotes the set of all the partitions of $N$ into distinct parts. 
If $\listP \in \dist_N$, the integers belonging to \[\mathcal M_\l \deq \{1,2,\dots, {\l_t}\} \setminus \{\l_1,\l_2,\dots, \l_t\}\] are called the \emph{missing parts} of $\lambda$, and are denoted by $\mu_1 < \mu_2 < \dots <\mu_m$, for some $m \geq 0$.  The least integer which is not a part of $\l$, i.e. $\mu_1$, is the \emph{minimal excludant} of $\l$~\cite{fraenkel2015harnessing}. We denote this by writing $\mu_1  = \mex{\l}$, taking $\mex{\l} = 0$ when $\mathcal M_\l = \emptyset$ as it is customary in the literature.

\begin{definition}\label{def_unref}
Let $N \in \mathbb N$. Let $\listP$ be a partition of $N$ into distinct parts and let $\mu_1 < \mu_2 < \dots <\mu_m$ be its missing parts.  The partition $\l$ is \emph{refinable} if there exist $1 \leq \ell \leq t$ and $1\leq i<j \leq m $  such that $\mu_i+\mu_j = \l_{\ell}$, and \emph{unrefinable} otherwise. The set of unrefinable partitions is denoted by $\up$, and by $\up_N$ we denote those whose sum of the parts is $N$.
\end{definition}

\begin{definition}
Let $n \in \mathbb N$. We denote by $T_n$ the \emph{$n$-th triangular number}, i.e. 
\[
T_n \deq \sum_{i=1}^n i = \dfrac{n(n+1)}{2}.
\]
 The \emph{complete partition} $\pi_n \deq (1,2,\ldots,n)$ is the partition of $T_n$ with no missing parts. 
\end{definition}
Notice that every complete partition is unrefinable. The same holds, by definition, for partitions with a single missing part. In particular, if $N=T_n$ for some $n$, then $\pi_n$ is an unrefinable partition of $N$. Otherwise, if $n$ is the least integer such that $N < T(n)$, then 
\begin{equation}\label{eq:pind}
\pi_{n,d} \deq (1,2,\dots,d-1,\hat{d},d+1,\dots, n)
\end{equation}
is an unrefinable partition of $N$, where $d =T_n - N$. 
In general, the admissible number of missing parts in an unrefinable partition is bounded as in the following result.

\begin{lemma}\label{lemma:bound}
Let $\listP$ be unrefinable and let $\missP$ be the missing parts. Then the number of missing parts $m$ is bounded by
\begin{equation}\label{bound1}
m \leq \left\lfloor \dfrac{\l_t}{2}\right\rfloor.
\end{equation}
\end{lemma}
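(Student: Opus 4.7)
The plan is to pair up integers in $\{1,2,\dots,\lambda_t-1\}$ with their ``complements'' to $\lambda_t$, and exploit the fact that $\lambda_t$ itself is a part of $\lambda$, hence no two distinct missing parts may sum to it.

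More precisely, I would first observe that every missing part lies in $\{1,2,\dots,\lambda_t-1\}$, since $\lambda_t$ is explicitly a part of $\lambda$. Then I would consider the involution $k \mapsto \lambda_t - k$ on this set: it decomposes $\{1,2,\dots,\lambda_t-1\}$ into unordered pairs $\{k,\lambda_t-k\}$ with $k\neq \lambda_t-k$, together with a single fixed point $\{\lambda_t/2\}$ in case $\lambda_t$ is even. The key step is that no such pair $\{k,\lambda_t-k\}$ can be entirely contained in $\mathcal M_\lambda$: otherwise one would obtain two distinct missing parts $\mu_i=k$ and $\mu_j=\lambda_t-k$ with $\mu_i+\mu_j=\lambda_t=\lambda_t$, a part of $\lambda$, contradicting Definition~\ref{def_unref}.

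Counting the missing parts according to this pairing, I would conclude: in the odd case, $\{1,\dots,\lambda_t-1\}$ splits into exactly $(\lambda_t-1)/2$ pairs and each contributes at most one missing part, so $m\le(\lambda_t-1)/2=\lfloor \lambda_t/2\rfloor$; in the even case, it splits into $\lambda_t/2-1$ pairs contributing at most one missing part each, plus the singleton $\{\lambda_t/2\}$ contributing at most one, giving $m\le \lambda_t/2=\lfloor\lambda_t/2\rfloor$. This yields the bound in~\eqref{bound1} in both parities.

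I do not expect a serious obstacle: the argument is a straightforward pigeonhole via the pairing $k\leftrightarrow \lambda_t-k$, and the only subtlety is the careful bookkeeping of the fixed point $\lambda_t/2$ when $\lambda_t$ is even, which is handled by the floor in the bound.
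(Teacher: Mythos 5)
Your proof is correct and is essentially the same argument as the paper's: the paper also observes that for each missing part $\mu_i$ the complement $\l_t-\mu_i$ must be present (else two distinct missing parts would sum to the part $\l_t$), and derives the bound from the pairing $k\leftrightarrow\l_t-k$. Your write-up is in fact slightly more careful than the paper's, since you handle the fixed point $\l_t/2$ explicitly, which the paper's phrasing glosses over.
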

\begin{proof}
Let us start by observing that $\l_t-\mu_i \in \l$ for $1 \leq i \leq m$, otherwise from $\l_t-\mu_i, \mu_i \in \Ml$ we obtain $ (\l_t-\mu_i)+\mu_i=\l_t\in \l$ and thus $\l$ is refinable.
We prove the claim considering the complete partition $\pi_{\l_t}$ and removing from this the maximum number of parts different from $\l_t$. 
For the previous observation, each candidate part $\mu_i$ to be removed  has a counterpart $\l_t-\mu_i$ in the partition.
The bound of Eq.~\eqref{bound1} depends on the fact that this process can be repeated no more than $\lfloor {\l_t}/{2}\rfloor$ times.
\end{proof}
As already anticipated, our focus is on the maximal part in a partition as in Definition~\ref{def_unref}. In the next section, using Lemma~\ref{lemma:bound}, we show  that the maximal part in an unrefinable partition of $n$ is $O(n^{1/2})$.
\section{Upper bounds on the maximal part}\label{sec:bounds}
It is easy to check that the complete partitions $\pi_1, \pi_2, \dots, \pi_5$ are the only unrefinable partitions for the triangular numbers $T_1, T_2, \ldots, T_5$ respectively. In the general case of $T_n$ for $n\geq 6$, this is not true. For example, the partition $(1,2,3,7,8)\vdash 21 = T_6$ is unrefinable. As a more complex example, in the case of $T_9$ we can calculate that 
\[
\begin{aligned}
&(1, 2, 3, 4, 5, 6, 7, 8, 9 ) &( 1, 2, 3, 5, 6, 7, 10, 11 ) \\
&( 1, 2, 3, 4, 6, 8, 10, 11 ) &( 1, 2, 3, 4, 5, 9, 10, 11 )\\
&( 1, 2, 3, 4, 6, 7, 10, 12  )&( 1, 2, 3, 4, 5, 8, 10, 12 )\\
&(1, 2, 3, 4, 5, 7, 11, 12 ) &( 1, 2, 3, 4, 5, 7, 10, 13 )\\
&( 1, 2, 3, 4, 5, 6, 11, 13 )& ( 1, 2, 3, 4, 5, 6, 10, 14 )\\
&( 1, 2, 4, 5, 8, 11, 14 )&
\end{aligned}
\]
are all the unrefinable partitions of $45=T_9$. 

It is clear that the property of being unrefinable imposes on the one hand an upper limitation on the size of the largest part which is admissible in the partition, and on the other a lower limitation on the minimal excludant.
We address in this section the natural question of determining what is the maximal part in an unrefinable partition of $N$. In the case where $N$ is a triangular number  the following result provides an answer. The notation introduced in the proof will be used throughout the paper.
\begin{proposition}\label{bound_trian}
Let $n \in \mathbb N$ and $N =T_n$. For every unrefinable partition $\listP$ of $N$ we have
\begin{equation}\label{eq:bound_trian}
n \leq \l_t \leq 2n-4.
\end{equation}
Equivalently, 
\[
\dfrac{\sqrt{1+8N}-1}{2}\leq \l_t \leq \sqrt{1+8N}-5.
\]
\end{proposition}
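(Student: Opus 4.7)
The lower bound $\lambda_t \geq n$ is immediate: as $\lambda$ consists of distinct positive integers bounded by $\lambda_t$, its sum is at most $T_{\lambda_t}$, and $T_{\lambda_t} \geq T_n$ forces $\lambda_t \geq n$. For the upper bound, set $L = \lambda_t$ and let $m$ be the number of missing parts, so that the number of parts is $t = L - m$. Since the $t-1$ parts other than $L$ are distinct positive integers, the minimum-sum inequality $T_{t-1} + L \leq T_n$ must hold; combined with $t \geq \lceil L/2 \rceil$ from Lemma~\ref{lemma:bound}, a short quadratic computation yields $L \leq 2n-2$ when $L$ is even and $L \leq 2n-3$ when $L$ is odd.

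The endpoint $L = 2n-2$ is eliminated by sharpening Lemma~\ref{lemma:bound} in the even case: the ``$L-\mu$ is a part'' observation from that proof, applied to the candidate $\mu = L/2$, shows that $L/2$ cannot be missing, so $m \leq L/2 - 1$ and $t \geq n$. Then the minimum sum $T_{n-1} + (2n-2)$ exceeds $T_n$ by exactly $n-2$, ruling out $L = 2n-2$ for $n \geq 3$.

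The delicate case is $L = 2n-3$. The same minimum-sum estimate forces $t = n-1$ (any $t \geq n$ would overshoot $T_n$), hence $m = n-2$. The involution $\mu \mapsto (2n-3)-\mu$ has no fixed point on $\{1,\ldots,2n-4\}$ and partitions this set into exactly $n-2$ pairs; by Lemma~\ref{lemma:bound}, at most one element of each pair is missing, so $\Ml$ contains exactly one element from each. A direct computation shows that the required sum $T_{2n-3} - T_n$ differs from the ``all large elements'' sum $\sum_{k=1}^{n-2}(2n-3-k) = (n-2)(3n-5)/2$ by exactly $2$, so if $K \subseteq \{1,\ldots,n-2\}$ indexes the pairs in which the small element is chosen, we need
\[
\sum_{k \in K}\bigl((2n-3) - 2k\bigr) = 2.
\]
But as $k$ runs over $\{1,\ldots,n-2\}$ the summands $(2n-3)-2k$ are the distinct positive odd integers $1, 3, \ldots, 2n-5$, and no subset of distinct positive odd integers has sum $2$. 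This contradiction gives $L \leq 2n-4$; the equivalent form in $\sqrt{1+8N}$ follows by substituting the positive root of $n(n+1)/2 = N$.

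The expected main obstacle is the odd endpoint $L = 2n-3$: the crude minimum-sum bound has slack exactly $2$, so ruling it out requires combining the forced pair structure of $\Ml$ imposed by unrefinability with the elementary observation that no collection of distinct positive odd integers can sum to $2$.
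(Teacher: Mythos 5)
Your route is genuinely different from the paper's: the paper parametrizes an unrefinable partition of $T_n$ by the parts removed from $\pi_n$ and their replacements $\alpha_1<\dots<\alpha_j$, and derives contradictions by summation in the three cases $\alpha_j=2n-3$, $\alpha_j=2n-2$, $\alpha_j>2n-2$; you work directly with the missing-part set, pair each $\mu$ with $\lambda_t-\mu$, and combine a minimum-sum estimate with a parity obstruction. Your generic bound and your treatment of the odd endpoint $L=2n-3$ are correct. But there is a genuine gap in your elimination of $L=2n-2$: you assert that the observation ``$L-\mu$ is a part for every missing $\mu$'' applies to $\mu=L/2$, so that $L/2$ cannot be missing and $m\le L/2-1$. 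That inference fails, because refinability (Definition~\ref{def_unref}) requires two \emph{distinct} missing parts $\mu_i+\mu_j=\lambda_\ell$ with $i<j$; the argument in the proof of Lemma~\ref{lemma:bound} therefore breaks down exactly when $\mu=\lambda_t-\mu$. Indeed the claim is false in general: the paper's own list for $T_9=45$ contains the unrefinable partition $(1,2,3,4,5,8,10,12)$, with $\lambda_t=12$ and $6=\lambda_t/2$ missing. So the subcase of $L=2n-2$ in which $n-1=L/2$ is missing is left open by your argument.

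The gap is patchable by the same parity device you use for $L=2n-3$. If $n-1$ is missing, then $m\le n-1$ and your minimum-sum bound forces $t=n-1$ and $m=n-1$; the missing set must then consist of $n-1$ together with exactly one element of each pair $\{k,\,2n-2-k\}$ for $k=1,\dots,n-2$. The required total $T_{2n-2}-T_n=(3n^2-7n+2)/2$ falls short of the all-large choice $(n-1)+\sum_{k=1}^{n-2}(2n-2-k)=(3n^2-7n+4)/2$ by exactly $1$, while replacing a large element by its small partner changes the sum by the even amount $2n-2-2k$; no collection of even numbers sums to $1$, a contradiction. With this subcase supplied, your proof is complete and is a legitimate alternative to the paper's.
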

\begin{proof} 
Let us start by considering the complete partition $\pi_n\vdash N$. Other unrefinable partitions of $N$ are obtained from $\pi_n$ by removing some  parts smaller than or equal to $n$ and replacing them 
with parts larger than $n$. Hence, the lower bound for the maximal part in any partition of $N$ is $n$, obtained when no part is removed. Since $N=n(n+1)/2$,  
$n$ is the positive solution of $n^2+n-2N=0$ and so
we have 
\[
\l_t \geq \dfrac{\sqrt{1+8N}-1}{2}.
\]
Let $h,j \in \mathbb N$ and let  us denote by $1 \leq a_1 < a_2< \dots< a_h \leq n$ the candidate parts to be removed from $\pi_n$ to obtain a new unrefinable partition of $N$, and by $n+1 \leq \al_1 < \al_2< \dots< \al_j$
the corresponding replacements. Since $\sum a_i = \sum \al_i$ we have $h > j$. Moreover $j>1$, otherwise from  $\sum a_i = \al_1$ the obtained partition is refinable. Hence we obtain 
\[
h \geq 3,\,\, 
j \geq  2, \text{ and } h > j\;.
\] 
There are $h$ missing parts in the interval $\{1,2,\dots,n\}$ and exactly $j$ parts appear in the interval $\{n+1,n+2,\dots, \al_j\}$. Therefore 
the number of missing parts of $\l$ is
\[
m=h+\al_j-n-j.
\]
To prove $\al_j \leq 2n-4$ we consider the cases where 
$\al_j$ is either equal to $2n-3$, equal to $2n-2$, or strictly larger than $2n-2$. We derive a contradiction in each case.
Let us observe that 
\[
\sum_{i=1}^h a_i \leq n + (n-1) + \dots + (n-(h-1)) = hn - \frac{(h-1)h}{2}.
\]

In the case $\al_j= 2n-3$ we obtain $m=h+n-3-j$. By Lemma~\ref{lemma:bound}, we have $m\leq \left\lfloor \al_j/2\right\rfloor=n-2$, hence 
$h\leq j +1$, and so $h=j+1$. Notice that
\[
\al_1+\dots +\al_j > (j-1)n + 2n-3= (j+1)n-3=hn-3.
\]
Therefore, since $\sum a_i = \sum \al_i$, we have 
\[
3>\frac{(h-1)h}{2},
\]
which is satisfied if $h < 3$, a contradiction.

In the case $\al_j= 2n-2$ we obtain $m=h+n-2-j$. By Lemma~\ref{lemma:bound}, we have $m\leq \left\lfloor \al_j/2\right\rfloor=n-1$, hence 
$h\leq j +1$, and so again $h=j+1$. Notice that
\[
\al_1+\dots +\al_j > (j-1)n + 2n-2= (j+1)n-2=hn-2.
\]
Therefore, since $\sum \al_i = \sum a_i$, we have 
\[
2>\frac{(h-1)h}{2},
\]
which is satisfied if $h < {(1+\sqrt{17})}/{2}< 3$, a contradiction.

To conclude we consider the last case $\al_j> 2n-2$. We have $n-1< {\al_j}/{2}$ and so 
\[
\al_j-(n-1)>  \frac{\al_j}{2}\geq\left\lfloor \dfrac{\al_j}{2}\right\rfloor.
\] 
Hence, since $h\geq j+1$, we have
\[
m=h+\al_j-n-j > \left\lfloor \dfrac{\al_j}{2}\right\rfloor+h-j-1\geq \left\lfloor \dfrac{\al_j}{2}\right\rfloor,
\]
which contradicts Lemma~\ref{lemma:bound}.
\end{proof}
Notice that the upper bound of Eq.~\ref{eq:bound_trian} is tight. Indeed,
let us define the following partition:
\begin{equation}\label{eq:pitilde}
\pt \deq (1,2, \dots, n-3, n+1,2n-4).
\end{equation}
It is easy to notice that $\pt \vdash N$ and that $\pt$ is unrefinable, since its least missing parts are $n-2$ and $n-1$, and 
$2n-4 <(n-2)+(n-1)$. In the notation of the proof of Proposition~\ref{bound_trian}, $\pt$ is obtained in the case $h=3$ and $j=h-1=2$.

The counterpart of Proposition~\ref{bound_trian} in the case of non-triangular numbers is obtained in a similar way.
\begin{proposition}\label{bound_untrian}
Let $N \in \mathbb N$ be such that $T_{n-1} < N < T_n$  for some $n \in \mathbb N$. For every unrefinable partition $\listP$  of $N$ we have
\begin{equation}\label{eq:bound_nontrian}
n \leq \l_t \leq 2n-2.
\end{equation}
Equivalently, 
\[
\dfrac{\sqrt{1+8(N+d)}-1}{2}\leq \l_t \leq \sqrt{1+8(N+d)}-3,
\]
where $d=T_n - N$.
\end{proposition}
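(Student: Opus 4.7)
The plan is to follow the strategy of Proposition~\ref{bound_trian}, replacing the complete partition $\pi_n$ by the base unrefinable partition $\pi_{n,d}$ from Eq.~\eqref{eq:pind}. For the lower bound I would observe that if $\l_t \leq n-1$ then every part of $\l$ belongs to $\{1,2,\dots,n-1\}$, and hence $N=\sum_{i}\l_i\leq T_{n-1}$, contradicting the hypothesis $T_{n-1}<N$. Therefore $\l_t \geq n$, a bound realised by $\pi_{n,d}$ itself.

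For the upper bound, write $L:=\l_t$ and let $m$ denote the number of missing parts of $\l$. By Lemma~\ref{lemma:bound} one has $m\leq\lfloor L/2\rfloor$, so the total number of parts satisfies
\[
t = L - m \geq L - \left\lfloor L/2\right\rfloor = \left\lceil L/2 \right\rceil.
\]
Since the parts of $\l$ are distinct positive integers, their sum is at least $T_t$, and in particular
\[
N = \sum_{i=1}^t \l_i \geq T_t \geq T_{\lceil L/2\rceil}.
\]
Combining this with the strict hypothesis $N < T_n$ gives $T_{\lceil L/2\rceil} < T_n$, i.e.\ $\lceil L/2\rceil \leq n-1$, whence $L\leq 2(n-1)=2n-2$. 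The equivalent formulation in terms of $N$ and $d$ then follows from the identity $\sqrt{1+8T_n}=2n+1$, which rewrites $n$ as $(\sqrt{1+8(N+d)}-1)/2$ and $2n-2$ as $\sqrt{1+8(N+d)}-3$.

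The crucial ingredient is the strict inequality $N<T_n$, which is precisely the hypothesis that distinguishes the non-triangular from the triangular case; this strict gap is exactly what produces the extra unit in $\lceil L/2\rceil\leq n-1$. For this reason the delicate case analysis on the configurations of removed and replacement parts $a_1<\dots<a_h$ and $\alpha_1<\dots<\alpha_j$ that was needed in Proposition~\ref{bound_trian}, where the equality $N=T_n$ left no slack, is not required here. I therefore do not foresee any real obstacle: the combination of Lemma~\ref{lemma:bound} with the two elementary estimates above is already tight enough to yield the claimed bounds.
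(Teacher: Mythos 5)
Your proof is correct, but it takes a genuinely different and more economical route than the paper's. The paper argues structurally: it views $\lambda$ as obtained from the base partition $\pi_{n,d}$ of Eq.~\eqref{eq:pind} via removed parts $a_1<\dots<a_h\leq n$ and replacements $\alpha_1<\dots<\alpha_j$, splits into the cases $\alpha_1=d$ and $\alpha_1>n$, counts the missing parts as $m=h+\alpha_j-n-j+1$ in either case, and shows that $\alpha_j>2n-2$ forces $h\leq j-1$, contradicting $h\geq j$ (which the paper extracts from $\sum a_i=\sum\alpha_i$). You bypass the removed/replacement bookkeeping entirely: from $m=\lambda_t-t$ and Lemma~\ref{lemma:bound} you get $t\geq\lceil\lambda_t/2\rceil$, while the elementary estimate $N\geq T_t$ for a sum of $t$ distinct positive integers together with the strict hypothesis $N<T_n$ gives $t\leq n-1$; these combine at once to $\lambda_t\leq 2n-2$. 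At bottom the two arguments hinge on the same inequality --- your $t\leq n-1$ is precisely the paper's $h\geq j$, since $\pi_{n,d}$ has $n-1$ parts --- but you reach it more directly, with no case split, and you also make the lower bound $\lambda_t\geq n$ explicit where the paper only asserts it in passing. What the paper's formulation buys is the $a_i/\alpha_i$ notation that is reused throughout Sections~\ref{sec:maximal} and~\ref{sec:odd}; what yours buys is brevity. One caveat you correctly flag: your method leans on the strict gap $N<T_n$ and would only yield $\lambda_t\leq 2n$ in the triangular case, so it complements rather than subsumes Proposition~\ref{bound_trian}.
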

\begin{proof}
Let us start by considering $d$ and the partition $\pi_{n,d}\vdash N$ as in Eq.~\eqref{eq:pind}. Other partitions of $N$ are obtained from $\pi_{n,d}$ by removing some parts  smaller than or equal to $n$ 
which are replaced by $d$ and other parts larger than $n$ or only by other parts larger than $n$. Proceeding as in the proof of Proposition~\ref{bound_trian}, let $h,j \in \mathbb N$ and let  us denote by $1 \leq a_1 < a_2< \dots< a_h \leq n$ the candidate parts to be removed from $\pi_{n,d}$ to obtain a new partition of $N$, and by $\al_1< \al_2<\dots<\al_j$ the corresponding replacements. Since $\sum a_i = \sum \al_i$ we have $h \geq j>1$, and we may obtain $h=j$ only if $\al_1=d$.  For this reason, we need to consider the two cases separately.

 Let us assume $\al_i > n$, for every $1\leq i\leq j$. Reasoning as in the proof of Proposition~\ref{bound_trian} we can count
$m=(h+1)+\al_j-n-j$.
On the other hand, if $\al_1=d$ and $\al_i > n$ for every $2\leq i\leq j$, then
we obtain just $h$ missing parts in the interval $\{1,2,\dots,n\}$ and exactly $j-1$ parts appear in the interval $\{n+1,n+2,\dots, \al_j\}$, therefore
 we obtain the same formula for the number of missing parts $m=h+\al_j-n-(j-1)$. By Lemma~\ref{lemma:bound} we obtain
 \begin{equation}\label{eqprop2}
 h+\left\lceil \dfrac{\al_j}{2}\right\rceil -n-j+1 \leq 0.
 \end{equation}
 If $\al_j > 2n-2$, then $\left\lceil {\al_j}/{2}\right\rceil \geq n$ and from Eq.~\eqref{eqprop2} we obtain $h\leq j-1$, a contradiction.
\end{proof}
\begin{remark}
Notice that the bound of Eq.~\eqref{eq:bound_nontrian} is reached by the partition $(1,2,\ldots,n-2,2n-2)\vdash T_n-1$ constructed from 
$\pi_{n,1}=(2,3,\ldots,n)$.
\end{remark}

We now introduce maximal unrefinable partitions, the main subject of this work,  as those partitions $\listP \in \mup{N}$ whose $\l_t$ is maximal. 

\begin{definition}\label{def:maxunref}
Let $N \in \mathbb N$. An unrefinable partition $\listP$ of  $N$ is called \emph{maximal} if  
\[
\l_t = \max_{(\l_1', \l_2', \dots, \l_t') \in \up_{N}} \l_t'.
\]
We denote by ${\mup{N}}$ the set of the maximal unrefinable partitions of $N$.
\end{definition}

In the case of triangular numbers $N =T_n$ for some $n \geq 6$, by virtue of Proposition~\ref{bound_trian}, we have that $\listP \in \mup{N}$ is maximal if and only if $\l_t = 2n-4$. 

\begin{remark}\label{rmk:notehj}
As already observed in the proof of Proposition~\ref{bound_trian}, for each $\l \in \up$ there exist $1 <j< h$, $1\leq a_1< a_2 <\dots < a_h\leq n$ and 
$\al_1, \al_2,\dots, \al_j\geq n+1$ such that $\l$ is obtained from $\pi_n$ by removing the parts $a_i$s  which are replaced by the parts $\al_i$s. Consequently,
$\#\mup{N}$ coincides with the number of choices which lead to partitions meeting the mentioned conditions and, in addition, the condition $\l_t=2n-4$.
In the remainder of the work, when $\l \in \up$ we will refer to $a_i$s, $\al_i$s, $j$ and $h$ as intended here.
\end{remark}


\section{Classification of maximal unrefinable partitions of triangular numbers}\label{sec:maximal}
We are now ready to prove our first main contribution. Using arguments similar to those of the proofs in the previous section, 
 we classify maximal unrefinable partitions for triangular numbers. 

\begin{theorem}\label{th:maxevenodd}
Let $n \in \mathbb N$, $n \geq 6$, and $N =T_n$. Then 
\begin{enumerate}
\item \label{eq1thm} if $n$ is even, then $ \mup{N}= \{\widetilde{\pi_n}\}$;
\item  \label{eq2thm} if $n$ is odd, then $\pt \in \mup{N}$ and the other partitions $\l \in \mup{N}$,  $\l \neq \widetilde{\pi_n}$, are such that $j = h-2$ and the following conditions hold:
\begin{enumerate}
\item[(i)]  the removed parts $a_1,\ldots, a_{h-3}$ are replaced by $2n-4-a_1, 2n-4-a_2, \ldots, 2n-4-a_{h-3}$, and
\item[(ii)] the triple $(a_{h-2},a_{h-1},a_{h})$ is one of the following
\[
(n-4,n-3,n-2),(n-4,n-2,n-1),(n-3,n-2,n),(n-2,n-1,n).
\]
\end{enumerate}
\end{enumerate}
\end{theorem}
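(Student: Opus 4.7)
The plan is to use Remark~\ref{rmk:notehj} to parametrize every $\l \in \mup{N}$ by integers $1 < j < h$, a set of removed parts $\{a_1 < \dots < a_h\} \subseteq \{1,\dots,n\}$, and a set of replacements $\{\al_1 < \dots < \al_j\} \subseteq \{n+1,\dots,2n-4\}$ with $\al_j = 2n-4$. First I would count the missing parts of $\l$: there are $h$ of them in $\{1,\dots,n\}$ and $n-4-j$ in $\{n+1,\dots,2n-5\}$, so $m = h + n - 4 - j$. Applying Lemma~\ref{lemma:bound} then yields $m \leq n-2$, so that $h - j \in \{1,2\}$.

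Next I would extract two structural constraints from unrefinability. For each missing $\mu \neq n-2$ the complement $2n-4-\mu$ must be a part of $\l$, otherwise $\mu + (2n-4-\mu) = \l_t$ would witness a refinement. Splitting the removed parts as $H_1 := \{a_i \leq n-5\}$ and $H_2 := \{a_i \geq n-4\}$ of respective sizes $h_1, h_2$, the map $a \mapsto 2n-4-a$ embeds $H_1$ into $\{\al_1,\dots,\al_{j-1}\}$, so $h_1 \leq j-1$. Similarly, neither $\{n-4,n\}$ nor $\{n-3,n-1\}$ can both lie in $H_2$, so direct inspection of the subsets of $\{n-4,n-3,n-2,n-1,n\}$ forces $h_2 \leq 3$; when equality holds, $H_2$ is exactly one of the four triples appearing in condition~(ii), each of which contains $n-2$. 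Combining the two bounds gives $h_2 \geq (h-j)+1$, so the case $h - j = 2$ forces $h_2 = 3$ while $h - j = 1$ leaves $h_2 \in \{2,3\}$.

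I would then treat the case $h - j = 2$. Here the set $C := \{\al_1,\dots,\al_{j-1}\} \setminus (2n-4-H_1)$ has cardinality $(j-1) - h_1 = h_2 - 3 = 0$, so the replacements are exactly $\{2n-4\} \cup (2n-4-H_1)$: this is condition~(i), and (ii) has already been shown. To dispose of this case when $n$ is even I would invoke the identity $\sum a_i = \sum \al_i$, which rearranges to $2 \sum H_1 + \sum H_2 = (h_1+1)(2n-4)$; the right-hand side is even, forcing $\sum H_2$ even, but each of the four triples in (ii) satisfies $\sum H_2 \equiv n+1 \pmod{2}$. Hence this case is impossible when $n$ is even, which after the Case~A analysis settles part~(1).

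The remaining task is to verify that the case $h - j = 1$ admits only $\l = \widetilde{\pi_n}$. Here $|C| = h_2 - 2 \in \{0,1\}$, and when $C = \{c\}$ the same sum identity determines $c = \sum H_2 + 2 \sum H_1 - (h_1+1)(2n-4)$. I would walk through the twelve admissible configurations (the eight valid pairs and four valid triples for $H_2$) and use the range constraints $a \in \{1,\dots,n-5\}$ for $a \in H_1$, together with $c \in \{n+1,\dots,2n-5\}$ when $|C|=1$, to eliminate every possibility except $H_1 = \emptyset$ and $H_2 = \{n-2,n-1,n\}$, which produces $c = n+1$ and $\l = \widetilde{\pi_n}$. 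The main obstacle is the bookkeeping in this last step, but each subcase is killed quickly by observing that for $h_1 \geq 1$ the quantity $(h_1+1)(2n-4) - 2\sum H_1$ already overshoots the admissible range of $c$, while for $h_1 = 0$ the required $\sum H_2 = 3n-3$ singles out the unique triple $\{n-2,n-1,n\}$.
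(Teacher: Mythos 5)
Your proposal is correct and follows essentially the same route as the paper's proof: the same count $m=h+n-4-j$ combined with Lemma~\ref{lemma:bound} to get $h-j\in\{1,2\}$, the same complementation argument forcing the replacements $2n-4-a_i$ and restricting the removed parts in $\{n-4,\dots,n\}$ to the four admissible triples, and the same parity identity to exclude the case $j=h-2$ for even $n$. The only difference is organizational: you run the final eliminations through the single residual replacement $c$ and one sum identity, whereas the paper bounds $\sum a_i$ and $\sum \al_i$ separately in each of its four cases.
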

\begin{proof}
 Let $\l\in\mup{N}$ 
  and let $a_1,a_2,\ldots, a_h$ and $\al_1,\al_2,\ldots,\al_j=2n-4$  as in Remark~\ref{rmk:notehj}.
We already know that $h\geq 3$. From the hyphotheses on $\l$ we have that \[m=h+\al_j-n-j=h+n-4-j.\] By Lemma~\ref{lemma:bound} we have $h-j\leq 2$, and, since $h>j$, we obtain $j\in\{h-1,h-2\}$. 
Notice that if $a \in \{a_1,\ldots,a_h\}$ is such that $a < n-4$, then $\al \deq 2n-4-a$ must belong to $\{\al_1,\ldots,\al_{j-1}\}$, otherwise $\al+a=2n-4=\al_j\in\l$, and so $\l$ is refinable.
Then each removed part $a_i$ such that $a_i<n-4$ is in one-to-one correspondence with its \emph{replacement} which, for the sake of simplicity, we will denote from now on by $\al_i$. On the other side, for the same symmetry argument, no part in the interval $\{n-4,\dots, n\}$ has a replacement. In such an interval we may choose at 
  most 5 parts. However, 
 we are not allowed to remove, at the same time, parts from one of the pairs $(n-4,n)$ and $(n-3,n-1)$ without contradicting the unrefinability of $\l$. Analogously, we are not allowed to remove more than three parts. Moreover, we cannot choose to pick only one part to be removed in that interval, otherwise we would obtain $h-1$ replacements but 
  at most $j-1$ are allowed, and $h > j$.
  
 We are left to consider the cases of two or three parts to be removed in the interval $\{n-4,\dots, n\}$, both with the assumptions $j = h-1$ or $j = h-2$. In particular, we will show that in both settings of $j$, there exists no maximal partition with two removed parts in the selected interval. Moreover, in the case $j=h-1$ and three removed parts, we show that the only admissible partition is $\pt$. Finally, partitions in the case $j=h-2$ and three removed parts are only possible for odd $n$ as claimed in~\eqref{eq2thm}. Let us address each of the four cases separately.
 
 Let us suppose $j=h-1$ and  $1\leq a_1<a_2<\dots<a_{h-2}\leq n-5$ and $n-4\leq a_{h-1}<a_h\leq n $. For each $1\leq i \leq j-1 = h-2$
 we have $\al_i=\al_j-a_i$. We will now show that this configuration leads to a contradiction. To do this, 
 we estimate $\sum a_i$ and $\sum \al_i$ from above and from below, respectively.
 This is clearly accomplished by noticing that $\al_{h-2}\geq n+1, \al_{h-3}\geq n +2, \ldots, \al_1\geq n+h-2$ and $a_h\leq n, a_{h-1}\leq n-1$, obtaining $a_{h-2}\leq\al_j-\al_{h-2} \leq n-5, a_{h-3} \leq n-6, \ldots, a_{1} \leq n-h-2$. 
 Hence 
\[
\sum_{i=1}^h a_i \leq hn-\sum_{i=1}^{h+2}i +2+3+4=hn-\frac{h^2+5h+6}{2}+9, \mbox{ and}
\]
\[
\sum_{i=1}^{j} \al_i \geq hn+\sum_{i=1}^{h-2}i -4=hn+\frac{h^2-3h+2}{2}-4.
\]
For $\sum a_i = \sum \al_i$ we obtain an inequality which is satisfied for $h< 3$ , which is a contradiction. 

In the second case, i.e.\ $j=h-1$, and $1\leq a_1<a_2<\dots<a_{h-3}\leq n-5$, $n-4\leq a_{h-2}<a_{h-1}<a_h\leq n$, we have  $\al_i=\al_j-a_i$ for every $1\leq i \leq h-3=j-2$. Notice that, in this case, the part $\al_{j-1} = \al_{h-2}$ is not determined by one of the $a_i$s. Proceeding as before, since
 $\al_{h-3}\geq n+1, \al_{h-4} \geq n+2, \ldots, \al_1 \geq n+h-3$, $\al_{h-2} \geq n+h-2$ and $a_h\leq n, a_{h-1} \leq  n-1, a_{h-2} \leq n-2$, we determine
 $a_{h-3} \leq \al_j-\al_{h-3} \leq n-5, a_{h-4} \leq n-6, \ldots, a_{1} \leq n-h-1$ and we obtain the bounds
 \[
\sum a_i \leq hn-\sum_{i=1}^{h+1}i +3+4=hn-\frac{h^2+3h+2}{2}+7, \mbox{ and}
\]
\[
\sum \al_i \geq hn+\sum_{i=1}^{h-2}i -4=hn+\frac{h^2-3h+2}{2}-4.
\]
From $\sum a_i = \sum \al_i$ we obtain an inequality which is satisfied only for $h= 3$, which corresponds to the partition (cf.~also Eq.~\eqref{eq:pitilde})
 \[(1,2,\ldots,n-3,n+1,2n-4)=\widetilde{\pi_n} \in \mup{N}.\]

The third case $j=h-2$ with two removed parts is immediately contradictory, since the parts $a_1, a_2, \dots, a_{h-2}$ determine $h-2=j$ replacements but at most $j-1$ are possible.

The last case to be considered is the one where $j = h-2 $ and the three largest parts $a_i$s are chosen in the interval $\{n-4,\dots, n\}$. As already observed, since $\l$ is unrefinable, the only possible choices are 
\[
(a_{h-2},a_{h-1},a_{h})\in\{(n-4,n-3,n-2),(n-4,n-2,n-1),(n-3,n-2,n),(n-2,n-1,n)\},
\]
which means $a_{h-2}+a_{h-1}+a_h \in \{3n-9, 3n-7, 3n-5, 3n-3\}$. From $\sum a_i = \sum \al_i$ we obtain
\[
a_1+a_2+\dots+a_h=(\al_{h-2}-a_1)+(\al_{h-2}-a_2)+\dots+(\al_{h-2}-\al_{h-3})+\al_{h-2}
\]
and so, since $\al_{h-2}=\al_j = 2n-4$,
\begin{equation}\label{eq:odd}
2(a_1+a_2+\dots+a_{h-3})+(a_{h-2}+a_{h-1}+a_h) = (h-2)(2n-4).
\end{equation}
Since the right side of Eq.~\eqref{eq:odd} is even and $a_{h-2}+a_{h-1}+a_h$ is  even  only if $n$ is odd, Eq.~\eqref{eq:odd} can be satisfied only in the case when $n$ is odd.
This proves \eqref{eq2thm} when $n$ is odd and that the partition $\pt$ of Eq.~\eqref{eq:pitilde} is the only maximal unrefinable partition of $T_n$ when $n$ is even, i.e.~\eqref{eq1thm}. 
\end{proof}
From Theorem~\ref{th:maxevenodd} we obtain that the description of maximal unrefinable partitions for the triangular number of an even integer is completed. The odd case is addressed in the following section.
\begin{corollary}
Let $k \in \mathbb N$ and $N=T_{2k}$. Then $\#\mup{N}= 1$.
\end{corollary}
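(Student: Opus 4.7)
The plan is to derive the corollary as a direct consequence of part~\eqref{eq1thm} of Theorem~\ref{th:maxevenodd}. Setting $n = 2k$, the integer $n$ is even by construction, so the theorem (applicable as soon as $n \geq 6$, i.e.\ $k \geq 3$) asserts that $\mup{N} = \{\widetilde{\pi_n}\}$, a singleton set. Taking cardinalities gives $\#\mup{N} = 1$ immediately.

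The only subtlety is the small cases $k \in \{1,2\}$ lying outside the hypothesis $n \geq 6$ of the theorem. These should be dispatched by quoting the remark made at the start of Section~\ref{sec:bounds}, according to which the complete partitions $\pi_1, \pi_2, \dots, \pi_5$ are the unique unrefinable partitions of $T_1, \dots, T_5$ respectively. In particular, $\up_{T_2} = \{\pi_2\}$ and $\up_{T_4} = \{\pi_4\}$, and hence each of these consists of a single (trivially maximal) partition. This handles $k = 1$ and $k = 2$.

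There is no real obstacle here: the work has already been done in establishing Theorem~\ref{th:maxevenodd}, and the corollary is merely a cardinality restatement of its even branch. The only thing to be careful about is making the small-$n$ disclaimer explicit so that the statement is correct for all $k \in \mathbb{N}$ and not only for $k \geq 3$.
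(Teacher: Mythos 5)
Your proof is correct and is essentially the paper's own (implicit) argument: the corollary is read off directly from part~(1) of Theorem~\ref{th:maxevenodd}, which already exhibits $\mup{T_{2k}}$ as the singleton $\{\widetilde{\pi_{2k}}\}$ for $2k\geq 6$. Your extra care with $k=1,2$ via the uniqueness of $\pi_1,\dots,\pi_5$ noted at the start of Section~\ref{sec:bounds} is a reasonable (and correct) way to cover the small cases the theorem's hypothesis excludes.
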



\section{Triangular numbers of odd integers}\label{sec:odd}
Throughout this last section, $N$ will denote the triangular number of an odd integer. More precisely,
let $n = 2k-1 \in \mathbb N$ be such that $N =T_n$. 

From Theorem~\ref{th:maxevenodd} we have that
the set of maximal unrefinable partitions of triangular numbers of odd integers can be partitioned in the following way
\[
\{\widetilde{\pi_n}\mid n \text{ odd }\}\,\dot\cup\,\mA \,\dot\cup\, \mB \,\dot\cup\, \mC \,\dot\cup\, \mD,
\]
where 
\[
\mA \deq \bigcup_{h \geq 4}\mA_h, \quad \mB \deq \bigcup_{h \geq 4}\mB_h, \quad \mC \deq \bigcup_{h \geq 4}\mC_h, \quad\mD \deq \bigcup_{h \geq 4}\mD_h
\]
and 
\begin{eqnarray*}
\mA_h &\deq&  \bigcup_{n \text{ odd }}\{\l \mid \l \in \mup{T_n}, (a_{h-2},a_{h-1}, a_{h}) = (n-4,n-3,n-2)\}, \\
\mB_h &\deq&  \bigcup_{n \text{ odd }}\{\l \mid \l \in \mup{T_n}, (a_{h-2},a_{h-1}, a_{h}) = (n-4,n-2,n-1)\}, \\
\mC_h &\deq&  \bigcup_{n \text{ odd }}\{\l \mid \l \in \mup{T_n}, (a_{h-2},a_{h-1}, a_{h}) = (n-3,n-2,n)\}, \\
\mD_h &\deq& \bigcup_{n \text{ odd }} \{\l \mid \l \in \mup{T_n}, (a_{h-2},a_{h-1}, a_{h}) = (n-2,n-1,n)\}. 
\end{eqnarray*}
Each set $\mA_h,\mB_h,\mC_h,\mD_h$ is called a \emph{class} of maximal unrefinable partitions. If $\l \in \mA_h$ (resp. $ \mB_h, \mC_h$ or $ \mD_h$) for some $h$ we say that $\l$ is a \emph{partition of class} $\mA_h$ (resp.\ $ \mB_h, \mC_h$ or $ \mD_h$).

The following consideration is a trivial but important consequence of Theorem~\ref{th:maxevenodd}.
\begin{corollary}\label{cor:asym}
Let $n\in\mathbb{N}$ and $N=T_n$. If $\l = (\l_1,\l_2, \dots, \l_t)\in\mup{N}$, then $\l_i\ne n-2$ for every $1\leq i\leq t$. 
\end{corollary}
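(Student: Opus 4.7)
The corollary is essentially a direct consequence of the classification in Theorem~\ref{th:maxevenodd}, so the plan is to inspect the possibilities case by case and verify that $n-2$ never appears as a part. I will tacitly assume $n \geq 6$, which is the regime in which Theorem~\ref{th:maxevenodd} applies; for $n < 6$ no ambiguity can arise since $\pt$ has not yet appeared and the statement is vacuous or trivially checked on the short list in Sec.~\ref{sec:prel}.

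First I would dispatch the partition $\pt = (1,2,\dots,n-3,n+1,2n-4)$ itself. Reading off the parts, $n-2 \notin \{1,\dots,n-3\}$ and $n-2\notin\{n+1,2n-4\}$ (using $n\geq 6$ for the latter), so $\pt$ trivially satisfies the claim for both parities of $n$. By Theorem~\ref{th:maxevenodd}~\eqref{eq1thm}, this already concludes the proof when $n$ is even.

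The substantive case is $n$ odd with $\l\in\mup{N}\setminus\{\pt\}$. Here Theorem~\ref{th:maxevenodd}~\eqref{eq2thm} furnishes the four admissible triples $(a_{h-2},a_{h-1},a_h)$, and I would note that \emph{each} of them contains $n-2$. Consequently $n-2$ is among the $a_i$s and is therefore removed from $\pi_n$. It remains to verify that $n-2$ is not reintroduced among the replacements: the non-maximal replacements are of the form $2n-4-a_i$ for $1\leq i\leq h-3$, with $a_i\leq n-5$ (since $a_i<a_{h-2}\leq n-4$), so $2n-4-a_i\geq n+1>n-2$; and the maximal replacement $\al_j=2n-4$ exceeds $n-2$ for $n\geq 6$. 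No replacement equals $n-2$, and since $n-2$ was among the removed parts, it cannot appear in $\l$.

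There is no real obstacle here beyond the bookkeeping of making sure all four triples are covered and both streams of replacements are checked; once one writes them down the inequalities are immediate.
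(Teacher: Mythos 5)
Your argument is correct in substance and is exactly the paper's (implicit) proof: the authors state the corollary as a trivial consequence of Theorem~\ref{th:maxevenodd} and give no further justification, and your case analysis --- $\pt$ never contains $n-2$; for odd $n$ and $\l\ne\pt$ every admissible triple contains $n-2$, which is therefore removed and never reintroduced by a replacement --- is precisely the bookkeeping they leave to the reader. Two small corrections. First, your parenthetical ``since $a_i<a_{h-2}\leq n-4$'' is wrong for the triples $(n-3,n-2,n)$ and $(n-2,n-1,n)$, where $a_{h-2}=n-3$ or $n-2$; the bound $a_i\leq n-5$ actually comes from the requirement $\al_i=2n-4-a_i\geq n+1$ (equivalently, from the proof of the theorem, where $a_1<\dots<a_{h-3}\leq n-5$), though for your purposes the weaker consequence $a_i<a_{h-2}\leq n-2$, hence $2n-4-a_i>n-2$, already suffices. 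Second, the claim that the statement is ``vacuous or trivially checked'' for $n<6$ is false: for $n=3,4,5$ the unique (maximal) unrefinable partition of $T_n$ is $\pi_n=(1,\dots,n)$, which does contain $n-2$, so the corollary as literally stated fails there; this is an implicit hypothesis ($n\geq 6$, and in the context of Section~\ref{sec:odd} in fact $n$ odd) that the paper omits, and you should state it rather than assert the small cases hold.
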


\begin{remark}[Anti-symmetric property]\label{rmk:anti}
From Theorem~\ref{th:maxevenodd}(\ref{eq2thm}) and Corollary~\ref{cor:asym} we derive that every partition $\l \in \mup{T_n}, \l \ne \pt$, is \emph{anti-symmetric} with respect to $n-2$, i.e.
for $1 \leq a <2n-4$ we have
\[
a \notin \l \iff 2n-4-a \in \l,
\]
provided that $a \neq n-2$.
\end{remark}

\begin{example}\label{exa13}
Let us fix $n=13$. In Tab.~\ref{tab_ex22sym} we have displayed the three different partitions of $\mup{T_{13}} \setminus \{\widetilde{\pi_{13}}\}$, where a black dot means that the corresponding integer is a part and the white dot means otherwise. Disregarding the last part which is fixed to be $2n-4$ due to the maximality constraint, the anti-symmetric property with respect to $n-2$ can be appreciated. Notice also that $\min_{\l \in \mup{T_{13}}}\mex{\l} = 5 = (n-3)/2$ and that $(n-2)-5+1 = 7 = \lceil n/2\rceil$.

\begin{table}[h]
\resizebox{0.93\textwidth}{!}{\begin{minipage}{\textwidth}
\begin{tabular}{cccccccc:cc||c||cc:cccccccc:c}
     &&&&&&&&&&\small{$n-2$}&&\small{$n$}&&&&&&&&&\small{$\l_t$}\\\hline
    1 & 2 & 3 & 4 & 5 & 6 & 7 & 8 & 9 & 10 & 11 & 12 & 13 & 14 & 15 & 16 & 17 & 18 & 19 &20 & 21&22 \\	\hline
   $\bl$ & $\bl$  & $\bl$  & $\bl$  & $\bl$  & $\bl$  & $\circ$ & $\bl$ & $\circ$ & $\circ$ & $\circ$ & $\bl$ & $\bl$ & $\circ$ &$\bl$  & $\circ$ & $\circ$ & $\circ$ & $\circ$ &$\circ$ & $\circ$&  $\bl$\\
  $\bl$  & $\bl$ &$\bl$  &$\bl$  &$\bl$  &$\circ$  &$\bl$  & $\bl$ &$\circ$  &  $\bl$& $\circ$ & $\circ$ & $\bl$ & $\circ$ & $\circ$ & $\bl$ &$\circ$  & $\circ$ & $\circ$ & $\circ$ &  $\circ$&$\bl$\\
       $\bl$ &$\bl$  & $\bl$ & $\bl$ &   $\circ$& $\bl$ & $\bl$ &$\bl$  & $\bl$ &  $\circ$ &  $\circ$ & $\bl$ &  $\circ$ &  $\circ$ &  $\circ$ &  $\circ$ & $\bl$ &  $\circ$ &  $\circ$ &   $\circ$&   $\circ$&$\bl$
\end{tabular} 
\end{minipage}}
\bigskip

\caption{The anti-symmetric property shown on the partitions $\l \in \mup{T_{13}}$, $\l \ne \widetilde{\pi_{13}}$.}
\label{tab_ex22sym}
\end{table}
\end{example}

\begin{example}\label{exa27}
As another significative example, we show in Tab~\ref{tab:exa27} all the partitions in $\mup{T_{27}} = \mup{378}$, classified according to the description
of Theorem~\ref{th:maxevenodd}. It is important to notice that, when $h \geq 5$, partitions in the same class may appear with different multiplicities.
Here all the parts $\l_i$s of the partitions are listed, divided in three \emph{areas} $1 \leq \l_i \leq n-5$, $n-4 \leq \l_i \leq n$ and $n+1 \leq \l_i \leq 2n-4$ naturally induced by Theorem~\ref{th:maxevenodd}. Notice again that we have $\min_{\l \in \mup{T_{27}}}\mex{\l} = 12 = (n-3)/2$. 

\begin{table}[]
\resizebox{0.9\textwidth}{!}{\begin{minipage}{\textwidth}
\begin{tabular}{|l|l|l||c|}
\hline
$ \quad \quad\quad \quad \quad \quad \quad \quad \quad \quad 1 \leq \l_i \leq 22$ & $23\leq \l_i \leq 27$ & $28 \leq \l_i \leq 50$ & \text{class}\\
\hline\hline
1 2 3 4 5 6 7 8 9 10 11 12 13 14 15 16 17 18 19 20 21 22 &23 24 &28 50 &$\widetilde{\pi_{27}}$\\ 
\hline\hline
1 2 3 4 5 6 7 8 9 10 11 12 13 15 16 17 18 19 20 21 22& 26 27 &36 50 &$\mA_4$\\ 
1 2 3 4 5 6 7 8 9 10 11 12 14 15 16 17 18 19 20 21 22 &24 27 &37 50 &$\mB_4$\\ 
1 2 3 4 5 6 7 8 9 10 11 13 14 15 16 17 18 19 20 21 22& 23 26 &38 50 &$\mC_4$\\ 
\hline\hline
1 2 3 4 5 6 7 8 9 10 11 12 13 14 15 16 17 18 21 22 &26 27& 30 31 50 &$\mA_5$\\ 
1 2 3 4 5 6 7 8 9 10 11 12 13 14 15 16 17 19 20 22& 26 27 &29 32 50 &$\mA_5$\\ 
1 2 3 4 5 6 7 8 9 10 11 12 13 14 15 16 18 19 20 21 &26 27 &28 33 50 &$\mA_5$\\ 
1 2 3 4 5 6 7 8 9 10 11 12 13 14 15 16 17 19 21 22& 24 27 &30 32 50 &$\mB_5$\\ 
1 2 3 4 5 6 7 8 9 10 11 12 13 14 15 16 18 19 20 22& 24 27 &29 33 50&$\mB_5$\\ 
1 2 3 4 5 6 7 8 9 10 11 12 13 14 15 17 18 19 20 21& 24 27 &28 34 50 &$\mB_5$\\ 
1 2 3 4 5 6 7 8 9 10 11 12 13 14 15 16 17 20 21 22& 23 26 &31 32 50&$\mC_5$\\ 
1 2 3 4 5 6 7 8 9 10 11 12 13 14 15 16 18 19 21 22 &23 26 &30 33 50 &$\mC_5$\\ 
1 2 3 4 5 6 7 8 9 10 11 12 13 14 15 17 18 19 20 22& 23 26 &29 34 50 &$\mC_5$\\ 
1 2 3 4 5 6 7 8 9 10 11 12 13 14 16 17 18 19 20 21 &23 26 &28 35 50 &$\mC_5$\\ 
1 2 3 4 5 6 7 8 9 10 11 12 13 14 15 16 18 20 21 22 &23 24 &31 33 50 &$\mD_5$\\ 
1 2 3 4 5 6 7 8 9 10 11 12 13 14 15 17 18 19 21 22 &23 24 &30 34 50 &$\mD_5$\\ 
1 2 3 4 5 6 7 8 9 10 11 12 13 14 16 17 18 19 20 22 &23 24& 29 35 50 &$\mD_5$\\ 
\hline\hline
1 2 3 4 5 6 7 8 9 10 11 12 13 14 15 16 17 18 19& 24 27 &28 29 30 50 &$\mB_6$\\ 
1 2 3 4 5 6 7 8 9 10 11 12 13 14 15 16 17 18 20& 23 26 &28 29 31 50 &$\mC_6$\\ 
1 2 3 4 5 6 7 8 9 10 11 12 13 14 15 16 17 18 21& 23 24 &28 30 31 50&$\mD_6$\\ 
1 2 3 4 5 6 7 8 9 10 11 12 13 14 15 16 17 19 20& 23 24 &28 29 32 50 &$\mD_6$\\ 
\hline
\end{tabular}
\end{minipage}}
\bigskip

\caption{Maximal unrefinable partitions of $378=T_{27}$ and the corresponding classes.}	
\label{tab:exa27}
\end{table}
\end{example}
\subsection{Bounds for $h$ and the minimal excludant}\label{sec:props}
It is natural to wonder, recalling that in general  $h \geq 4$, what is an upper bound for $h$ in a maximal unrefinable partition $\l \in \mup{T_{2k-1}}$. The answer to this question is provided in Proposition~\ref{prophgen}, from which we also derive the result on the lower bound for the minimal excludant in maximal unrefinable partitions (cf.~Corollary~\ref{cor:mex}).
Let us address before the two extremal cases $h=4$ and $h=5$.
\begin{proposition}\label{proph4}
Let $n \geq 7$ be odd. We have:
\begin{enumerate}
    \item \label{item1_proph4} $\mup{T_n}\cap \mD_4=\emptyset$,
    \item \label{item2_proph4}$\mup{T_7}\cap \mC_4=\emptyset$ and if $n\geq9$, then   $\#(\mup{T_n}\cap \mC_4)=1$,
    \item \label{item3_proph4}$\mup{T_7}\cap \mB_4=\mup{T_9}\cap \mB_4=\emptyset$ and if $n\geq 11$, then $\#(\mup{T_n}\cap \mB_4)=1$,
    \item \label{item4_proph4}$\mup{T_7}\cap \mA_4=\mup{T_9}\cap \mA_4=\emptyset$  and if $n\geq 11$, then   $\#(\mup{T_n}\cap \mA_4)=1$.
\end{enumerate}
\end{proposition}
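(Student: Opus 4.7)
The plan is to apply Theorem~\ref{th:maxevenodd} to reduce each class to a single unknown $a_1$, and then verify unrefinability. Since $h = 4$ forces $j = h - 2 = 2$ by part~\eqref{eq2thm} of that theorem, the replacements are $\alpha_2 = 2n - 4$ and $\alpha_1 = 2n - 4 - a_1$, and the balance $\sum_i a_i = \alpha_1 + \alpha_2$ reads
\[
2a_1 + a_2 + a_3 + a_4 = 4n - 8.
\]
Substituting the triples fixed by Theorem~\ref{th:maxevenodd}(ii) one reads off
\[
a_1 = \tfrac{n+1}{2},\ \tfrac{n-1}{2},\ \tfrac{n-3}{2},\ \tfrac{n-5}{2}
\]
for $\mA_4, \mB_4, \mC_4, \mD_4$ respectively; since $n$ is odd all four values are integers. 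Imposing $1 \leq a_1 < a_2$ then yields the lower bounds $n \geq 11, 9, 5, 7$ for the four classes. Uniqueness in each non-empty class will be automatic, because $a_1$ determines $\alpha_1$ and the anti-symmetric property of Remark~\ref{rmk:anti} fixes the rest of~$\l$.

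The main obstacle is the unrefinability check, and the anti-symmetric property clears most of the work: no distinct pair of missing parts can sum to $\alpha_2 = 2n-4$ (otherwise one would itself be a part), and any two missing parts chosen from $[n+1,2n-5]$ already sum above $2n-4$. The only dangerous sums are therefore $a_i + a_k$ and $a_i + b$ with $b \in [n+1,2n-5] \setminus \{\alpha_1\}$ missing, and they can only match $\alpha_1$ or a part at most $n$. Solving $a_i + b = \alpha_1$ gives $b = 2n-4 - a_1 - a_i$, and the constraint $b \geq n+1$ becomes $a_1 + a_i \leq n-5$; a direct computation shows this is impossible in $\mA_4, \mB_4, \mC_4$ but becomes an equality in $\mD_4$ for $i = 1$, with $b = n+1$.

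This last observation immediately settles $\mD_4$: the pair $(a_1, n+1)$ consists of two missing parts (one checks $n+1 \neq \alpha_1 = (3n-3)/2$ for every odd $n \geq 7$) whose sum equals $\alpha_1$, proving~\eqref{item1_proph4}. In the remaining three classes, a short tabulation of the six sums $a_i + a_k$ shows that above the threshold each either exceeds $2n-4$ or lies strictly between $n$ and $2n-4$ and is distinct from $\alpha_1$ and $\alpha_2$, hence is a missing value rather than a part. The one delicate sum is $a_1 + a_2$, which equals $(3n-7)/2$ in $\mA_4$ and $(3n-9)/2$ in both $\mB_4$ and $\mC_4$, and can drop to $\le n$ only for small $n$. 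Inspecting the critical values gives $a_1 + a_2 = 6 \in P$ in $\mC_4$ at $n = 7$ and $a_1 + a_2 = 9 \in P$ in $\mB_4$ at $n = 9$, producing explicit refinements that rule out these exceptional cases, whereas in $\mC_4$ at $n = 9$ the sum $9 = n$ coincides with the removed part $a_4$ and so is not in $P$, leaving unrefinability intact. Together with the uniqueness argument this establishes \eqref{item2_proph4}, \eqref{item3_proph4}, and \eqref{item4_proph4}.
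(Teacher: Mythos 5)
Your proposal is correct and follows essentially the same route as the paper: solve the balance equation $2a_1+a_2+a_3+a_4=4n-8$ for $a_1$ in each class, obtain the thresholds from the ordering constraints (the paper phrases this as $\al_1\geq n+1$, which for the computed values of $a_1$ is equivalent to your $1\leq a_1<a_2$ under the hypothesis $n\geq 7$), kill $\mD_4$ via the refinement $a_1+(n+1)=\al_1$, and rule out the boundary cases $n=7$ in $\mC_4$ and $n=9$ in $\mB_4$ by the same explicit refinements $a_1+a_2=n-1$ and $a_1+a_2=n$. Your organization of the unrefinability check through the anti-symmetry of the candidate (including the observation that at $n=9$ in $\mC_4$ the sum $a_1+a_2=n$ hits a removed part and is therefore harmless) is a slightly more systematic presentation of the same verification the paper performs sum by sum.
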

\begin{proof}
Let $\l \in \mup{T_n}$ be obtained by removing the integers $a_1, a_2, a_3, a_4$ and adding the replacements $\al_1$ and $\al_2=2n-4$, which need to satisfy the following conditions:
\begin{enumerate}[(i)]
    \item $1\leq a_1\leq n-5$,
    \item $n-4\leq a_2<a_3<a_4\leq n$,
    \item $n+1\leq\alpha_1=2n-4-a_1$,
    \item \label{item_last} $a_1+a_2+a_3+a_4=\al_1+\al_2$.
\end{enumerate}
For a  contradiction, let us assume that $\l \in \mD_4$, and so $a_2=n-2, a_3=n-1$ and $a_4=n$. From Eq.~(\ref{item_last})  we obtain 
\[a_1=\frac{n-5}{2} \quad \text{ and } \quad \al_1=\frac{3n-3}{2}.\]
Notice that $a_1+(n+1)=\al_1$ and, by hypothesis, $\al_1 \geq n+1$. If $\al_1=n+1$, we obtain $n=5$, a contradiction. Otherwise, since $(n+1) \notin \l$, we obtain that $\l$ is refinable. 
The claim~\eqref{item1_proph4} is then proved.

Let us now address the case~\eqref{item2_proph4}. Similarly as before, we now  have  $a_2=n-3, a_3=n-2$ and $a_4=n$ and so we determine
 \[a_1=\frac{n-3}{2} \quad \text{ and } \quad \al_1=\frac{3n-5}{2}.\]
Notice that from $\al_1 \geq n+1$ we obtain $n \geq 7$. However, assuming 
$n=7$ leads to $a_1+a_2=6=n-1 \in \l$, a contradiction since $\l$ is unrefinable.
Let us now prove that the obtained partition
\[
\l \deq \left(\dots,\hat{\frac{n-3}{2}},\dots,n-4,n-1,\frac{3n-5}{2},2n-4\right)
\]
 is unrefinable by showing that each possible sum $a_i+a_j$,
with $1\leq i < j \leq 4$, is different from $\al_1$. Recall that by the classification of Theorem~\ref{th:maxevenodd} we have already 
ruled out those partitions which contradict the unrefinability in $2n-4$.
Since $n\geq 9$, we have that $a_1+a_2={(3n-9)}/{2}>n-1$. Consequently, every sum of missing parts is larger than $n-1\in\l$.
Moreover, $a_1+a_2\neq\al_1$, $a_1+a_3={(3n-7)}/{2}\neq\al_1$,  $a_1+a_4={(3n-3)}/{2}>\al_1$,  $a_2+a_3=2n-5>\al_1$ and therefore $a_2+a_4, a_3+a_4 > \al_1$.
Therefore $\l \in\mC_4$ and it is unique by construction, which proves the claim~\eqref{item2_proph4}.

In the case of $\mB_4$, 
we find
\[a_1=\frac{n-1}{2} \quad \text{ and } \quad\al_1=\frac{3n-7}{2}.\]
From  $\al_1\geq n+1$ we have $n\geq9$, and assuming $n=9$  contradicts again the unrefinability; therefore $n\geq11$. With arguments similar to those of the previous case  the partition 
\[\left(\dots,\hat{\frac{n-1}{2}},\dots,n-5,n-3,n,\frac{3n-7}{2},2n-4\right)\] is proved unrefinable and unique by construction, hence~\eqref{item3_proph4} is obtained.

Finally, considering the case of $\mA_4$, we obtain
\[ a_1=\frac{n+1}{2} \quad \text{ and } \quad \al_1=\frac{3n-9}{2}.\]
Now, $\al_1\geq n+1$ implies $n\geq11$ and $a_1+a_2={(3n-7)}/{2}>\al_1$. This proves that
 \[\left(\dots,\hat{\frac{n+1}{2}},\dots,n-5,n-1,n,\frac{3n-9}{2},2n-4\right)\in\mA_4,\]
 i.e.\ the claim~\eqref{item4_proph4}.
\end{proof}

\begin{proposition}\label{proph5}
Let $n \geq 7$ be odd and $k\geq 0$. We have:
\begin{enumerate}

\item \label{item1_proph5} if $n < 15$, then $\mup{T_n}\cap \mC_5=\emptyset$   and  if $n \geq 15$, then   $\#(\mup{T_{15+2k}} \cap \mC_5) = \lfloor k/2\rfloor+1$,
\item \label{item2_proph5} if $n < 17$, then $\mup{T_n}\cap \mB_5=\emptyset$   and  if $n \geq 17$, then $\#(\mup{T_{17+2k}} \cap \mB_5 )= \lfloor k/2\rfloor+1$,
\item \label{item3_proph5} if $n < 17$, then $\mup{T_n}\cap \mD_5=\emptyset$   and  if $n \geq 17$, then  $\#(\mup{T_{17+2k}} \cap \mD_5) = \lfloor k/2\rfloor+1$,
\item \label{item4_proph5} if $n < 19$, then $\mup{T_n}\cap \mA_5=\emptyset$   and  if $n \geq 19$, then  $\#(\mup{T_{19+2k}} \cap \mA_5) = \lfloor k/2\rfloor+1$.

\end{enumerate}
\end{proposition}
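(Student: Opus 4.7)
The plan is to adapt the case analysis of Proposition~\ref{proph4} to $h=5$, now with two small removed parts $a_1<a_2\le n-5$ and three fixed middle ones $(a_3,a_4,a_5)$ dictated by the class. By Theorem~\ref{th:maxevenodd}(i) the corresponding replacements are $\al_1=2n-4-a_2$, $\al_2=2n-4-a_1$ and $\al_3=2n-4$, so $\sum a_i=\sum\al_i$ simplifies to
\[
2(a_1+a_2)+(a_3+a_4+a_5)=3(2n-4),
\]
and substituting the four admissible triples of Theorem~\ref{th:maxevenodd}(ii) forces $a_1+a_2=s$, where
\[
s_{\mA}=\tfrac{3n-3}{2},\qquad s_{\mB}=\tfrac{3n-5}{2},\qquad s_{\mC}=\tfrac{3n-7}{2},\qquad s_{\mD}=\tfrac{3n-9}{2}.
\]
Each $s$ is an integer because $n$ is odd, and combined with $1\le a_1<a_2\le n-5$ it confines $a_1$ to the interval $[s-n+5,\lfloor(s-1)/2\rfloor]$. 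A direct count shows that this interval is empty for $n$ below the stated threshold, contains exactly $\lfloor k/2\rfloor+1$ lattice points when $n=n_0+2k$ with $n_0=19,17,15$ for classes $\mA_5,\mB_5,\mC_5$ respectively, and $\lfloor k/2\rfloor+2$ lattice points when $n=17+2k$ for class $\mD_5$.

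I would then verify unrefinability of the partition attached to each admissible pair. The missing parts consist of the small pair $\{a_1,a_2\}$, three class-dependent middle parts in $\{n-4,\dots,n\}$, and the complement of $\{\al_1,\al_2\}$ in $\{n+1,\dots,2n-5\}$. Two large missing parts, or a middle with a large missing part, always sum to more than $2n-4$ and so cannot refine any part. By the anti-symmetry of Remark~\ref{rmk:anti}, a small missing part $a_i$ paired with a large missing part $\nu$ cannot sum to a part either, because the only candidate $\nu=2n-4-a_i$ is forced to be in $\l$. Sums of two middle missing parts lie in $[2n-7,2n-1]$, so setting one equal to $\al_1$ or $\al_2$ would force $a_1$ or $a_2$ into $\{1,2,3\}$, outside the admissible range. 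Finally, a sum $a_i+\mu$ with $\mu$ middle missing, set equal to some $\al_j$, reduces either to $2a_i=2n-4-\mu$, whose only odd-$n$ solutions $a_i\in\{(n-3)/2,(n-1)/2\}$ lie below the admissible lower endpoint in every class, or to $a_1+a_2\in\{n-4,\dots,n\}$, which contradicts the value of $s$ for $n\ge 13$.

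The single remaining candidate is $a_1+a_2\in\{\al_1,\al_2\}$, which rearranges to $a_1=2n-4-s$ or $a_2=2n-4-s$ and evaluates to $(n-5)/2$, $(n-3)/2$, $(n-1)/2$, $(n+1)/2$ for classes $\mA_5$, $\mB_5$, $\mC_5$, $\mD_5$ respectively. In the first three classes this value lies strictly below the lower endpoint $s-n+5$ of the admissible interval, so no admissible pair is refinable and the count $\lfloor k/2\rfloor+1$ stands. In $\mD_5$ the value equals the endpoint $(n+1)/2$, producing exactly one refinable admissible pair whose removal reduces the raw count $\lfloor k/2\rfloor+2$ to $\lfloor k/2\rfloor+1$. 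The main obstacle I expect is the exhaustive nature of the unrefinability verification, since each class supplies a different middle-missing triple and hence a different list of dangerous identities; the anti-symmetry property, however, collapses them all into a small number of Diophantine templates that can be dispatched uniformly.
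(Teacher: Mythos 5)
Your proposal is correct and follows essentially the same route as the paper: both reduce each class to the Diophantine condition $a_1+a_2=s$ with $1\le a_1<a_2\le n-5$, count the admissible pairs with that fixed sum, and discard the single refinable pair $a_1=(n+1)/2$ arising in class $\mD_5$. The only cosmetic differences are that the paper certifies unrefinability via the single inequality $a_1+a_2>\al_1$ (refinements at $2n-4$ having already been excluded by Theorem~\ref{th:maxevenodd}) where you run an exhaustive case analysis, and that for $\mD_5$ at $n=13,15$ your interval is nonempty but consists exactly of the refinable endpoint, so your ``empty below the stated threshold'' phrasing needs the refinable-pair removal to apply there as well.
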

\begin{proof}
Let us proceed as in the proof of Proposition~\ref{proph4}.  Let $\l \in \mup{T_n}$ be obtained by removing the integers $a_1, a_2, \dots, a_5 $ and adding the replacements $\al_1$, $\al_2$ and $\al_3=2n-4$, which need to satisfy the following conditions:
\begin{enumerate}[(i)]
    \item $1\leq a_1<a_2\leq n-5$,
    \item $n-4\leq a_3<a_4<a_5\leq n$,
    \item \label{item_a1a2} $n+1\leq\al_2=2n-4-a_2<\al_1=2n-4-a_1$,
    \item \label{item_last2} $a_1+a_2+a_3+a_4+a_5=\al_1+\al_2+\al_3$.
\end{enumerate}
First, let us  address the case~\eqref{item1_proph5}. We  have  $a_3=n-3, a_4=n-2$ and $a_5=n$ and, from Eq.~\eqref{item_a1a2} and Eq.~\eqref{item_last2}, $a_1$ and $a_2$ satisfy the
condition  
 \begin{equation}\label{proph5:eq1}
 a_1+a_2=\frac{3n-7}{2}.
 \end{equation}
  We first consider the case when $a_2$ is maximal, i.e.\ $a_2=n-5$, in which we have $a_1=(n+3)/2$ and consequently $\al_1 = (3n-11)/2$ and $\al_2= n+1$.
  Notice then that the condition of Eq.~\eqref{proph5:eq1} can be met in $\lfloor(a_2-a_1-1)/2\rfloor$ other ways by taking the first two parts to be removed as
   $a_1+i$ and $a_2-i$, for $1\leq i \leq \lfloor(a_2-a_1-1)/2\rfloor$. Now, from $a_1 <a_2$ we obtain $n\geq 15$. If $n=15$, the partition
\[
\left(\dots,\hat{\frac{n+3}{2}},\dots,\hat{n-5},n-4,n-1,n+1,\frac{3n-11}{2},2n-4\right)
\]
is unique by construction and is unrefinable since $a_1+a_2 > \al_1$.
In the other cases, which are
\begin{equation}\label{proph5:eq2}
\left\lfloor\dfrac{a_2-a_1-1}{2}\right\rfloor = \left\lfloor\dfrac{n-15}{4} \right\rfloor,
\end{equation} 
 we obtain an unrefinable partition since, letting $\al_1'= 2n-4-(a_1+i)$ and $\al_2'= 2n-4-(a_2-i)$, we have 
\[
(a_1+i)+(a_2-i) = a_1+a_2 > \al_1 > \al_1' >\al_2'.
\]
The claim~\eqref{item1_proph5} is then obtained writing $n=15+2k$ in Eq.~\eqref{proph5:eq2}.

The proofs for~\eqref{item2_proph5} and~\eqref{item4_proph5}  are obtained in the same way. When $n=17$, the partition 
\[
\left(\dots,\hat{\frac{n+5}{2}},\dots,\hat{n-5},n-3,n,n+1,\frac{3n-13}{2},2n-4\right)\in\mB_5
\]
and is unique by construction, and when $n > 17$ it can be modified in $\lfloor(a_2-a_1-1)/2\rfloor = \lfloor(n-17)/4\rfloor$ ways as in the
proof of~\eqref{item1_proph5}.
Analogously, when $n=19$ the partition
\[
\left(\dots,\hat{\frac{n+7}{2}},\dots,\hat{n-5},n-1,n,n+1,\frac{3n-15}{2},2n-4\right)\in\mA_5
\]
and is unique by construction, and when $n > 19$ it can be modified in $\lfloor(n-19)/4\rfloor$ ways.

It remains to prove the slightly different case~\eqref{item3_proph5}. Here, 
we have 
$a_1+a_2=(3n-9)/2$ and, proceeding as above, from $a_2=n-5$ we obtain $a_1=(n+1)/2$ and $\al_1 = (3n-9)/2$. 
This leads to the contradiction $a_1+a_2 = \al_1$. The argument of~\eqref{item1_proph5} is here replicated 
starting from $a_2= n-6$.
It is now easy to see that, when $n=17$, the partition
\[
\left(\dots,\hat{\frac{n+3}{2}},\dots,\hat{n-6},\dots,n-3,n+2,\frac{3n-11}{2},2n-4\right),
\]
unique by construction, is unrefinable.  When $n > 17$, it can be modified in $\lfloor(n-17)/4\rfloor$ ways, which proves~\eqref{item3_proph5}.
\end{proof}
 
 \begin{proposition}\label{prophgen}
Let $n\geq7$ be odd and $h\geq6$. We have
\begin{enumerate}
\item \label{item1_proph6} $\mup{T_n}\cap\mD_h \neq \emptyset$ if and only if $n \geq h^2-h-7$,
\item \label{item2_proph6} $\mup{T_n}\cap\mC_h \neq \emptyset$ if and only if $n\geq h^2-h-5$,
\item \label{item3_proph6} $\mup{T_n}\cap\mB_h \neq \emptyset$ if and only if $n \geq h^2-h-3$,
\item \label{item4_proph6} $\mup{T_n}\cap\mA_h \neq \emptyset$ if and only if $n \geq h^2-h-1$.
\end{enumerate}
 \end{proposition}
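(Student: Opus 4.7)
The plan is to handle all four classes uniformly by introducing a class parameter. Writing the triple sum as $S_X = 3n-10+c'$ with $c'\in\{1,3,5,7\}$ corresponding respectively to $X\in\{\mA,\mB,\mC,\mD\}$, each of the four claimed thresholds becomes the single condition $n\geq h^2-h-c'$. I adopt the notation of Remark~\ref{rmk:notehj} throughout: a partition $\lambda\in\mup{T_n}\cap X_h$ is determined by the removed parts $a_1<\dots<a_{h-3}$ in $[1,n-5]$, the class-specific triple $(a_{h-2},a_{h-1},a_h)$, and the replacements $\alpha_i = 2n-4-a_i$ for $1\leq i\leq h-3$ together with $\alpha_{h-2}=2n-4$.

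For the necessity direction, Equation~\eqref{eq:odd} in the proof of Theorem~\ref{th:maxevenodd} immediately yields
\[
\Sigma_X \coloneqq a_1+\dots+a_{h-3} = \frac{(h-2)(2n-4)-S_X}{2}.
\]
Since the $a_i$ are $h-3$ distinct integers in $[1,n-5]$, their sum is at most $M \coloneqq (n-5)+(n-6)+\dots+(n-(h-3)) = (h-3)(2n-h-6)/2$. A one-line simplification of $\Sigma_X\leq M$ yields exactly $n\geq h^2-h-c'$, establishing the necessity in each case.

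For sufficiency I will exhibit one explicit partition. Given $n\geq h^2-h-c'$, define $\delta \coloneqq M-\Sigma_X = (n-h^2+h+c')/2$, a non-negative integer (the parity works because $n$ is odd and $c'$ is odd, while $h^2+h$ is even). I propose
\[
a_1 \coloneqq n-h-1-\delta,\qquad a_i \coloneqq n-h-2+i \text{ for }2\leq i\leq h-3,
\]
together with the class-specific triple. The sum equation is satisfied by construction; $a_1\geq 1$ is equivalent to $\delta\leq n-h-2$, which is trivial for $h\geq 6$; and the $\alpha_i$'s are distinct and fall in the required intervals. Removing the $a_i$'s from $\pi_n$ and inserting the $\alpha_i$'s produces a partition $\lambda$ in class $X_h$.

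The main obstacle is verifying that this $\lambda$ is unrefinable, and this is what motivates the specific construction. The partition is anti-symmetric with respect to $n-2$ (in the sense of Remark~\ref{rmk:anti}), which is straightforward to check on each class, so no pair of distinct missing parts sums to $2n-4$. The two smallest missing parts are $\mu_1=a_1=n-h-1-\delta$ and $\mu_2=a_2=n-h$, so $\mu_1+\mu_2 = 2n-2h-1-\delta$; using the key identity $n-2\delta=h^2-h-c'$ one computes
\[
(\mu_1+\mu_2)-\alpha_1 = h^2-4h+2-c',
\]
which is strictly positive for $h\geq 6$ and $c'\leq 7$. A parallel estimate gives $\mu_1+\mu_2>n$ under the same hypotheses, so no pair of missing parts sums to any part of $\lambda$ strictly smaller than $2n-4$. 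Combined with the anti-symmetry argument, this shows that $\lambda$ is unrefinable, finishing the proof.
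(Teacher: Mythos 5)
Your proof is correct and follows essentially the same route as the paper's: the same extremal configuration $a_2=n-h,\dots,a_{h-3}=n-5$ with $a_1$ determined by the sum constraint, the same threshold computation (your condition $\Sigma_X\leq M$ is exactly equivalent to the paper's ``$a_1<a_2$'' in the maximized configuration), and the same key inequality $a_1+a_2>\al_1$ guaranteeing unrefinability. The differences are cosmetic --- the uniform parameter $c'$ replacing four parallel computations, and a more explicit justification (which the paper leaves largely implicit) of why anti-symmetry plus $a_1+a_2>\al_1$ rules out all refinements; only note that your written-out form of $M$ should terminate at $n-(h+1)$ rather than $n-(h-3)$, though the closed form $(h-3)(2n-h-6)/2$ you actually use is correct.
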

 \begin{proof}
 We proceed as in Proposition~\ref{proph4} and Proposition~\ref{proph5}, assuming the conditions 
 \begin{enumerate}[(i)]
    \item $1\leq a_1<a_2< \dots<a_{h-3}\leq n-5$,
    \item $n-4\leq a_{h-2}<a_{h-1}<a_h\leq n$,
    \item \label{item2_a1a2}$n+1\leq\al_{h-3}=2n-4-a_{h-3}<\al_{h-4}=2n-4-a_{h-4}<\dots<\al_1=2n-4-a_1$,
    \item \label{item2_last2}$\sum a_i=\sum \al_i$.
\end{enumerate}
If $\l\in \mup{T_n}\cap\mD_h$, then $a_{h-2}+a_{h-1}+a_h=3n-3$ and therefore, from Eq.~\eqref{item2_a1a2} and Eq.~\eqref{item2_last2},
\[
a_1+a_2+\dots+a_{h-3}=\frac{(h-2)(2n-4)-(3n-3)}{2}=\frac{(2h-7)n+11-4h}{2}.
\]
Let us now assume that $a_{h-3}=n-5, a_{h-4}=n-6,\dots, a_2=n-h$, i.e.\ let us maximize the sum $a_2+\dots+a_{h-3}$. 
We obtain \[a_2+\dots +a_{h-3}=(h-4)n-\sum_{i=5}^h i=(h-4)n-\frac{h(h+1)}{2}+10,\] from which we can calculate 
\[
a_1=\frac{n+h^2-3h-9}{2}.
\]

Imposing $a_1<a_2$ we obtain $n>h^2-h-9$. In this setting, we have 
\[
\al_1=\frac{3n-h^2+3h+1}{2} \quad\text{ and } \quad a_1+a_2=\frac{3n+h^2-5h-9}{2}.
\]
 Notice 
that $a_1+a_2 > \al_1$ is satisfied for $h \geq 6$, hence the provided construction leads
to a partition $\l$ which belongs to $\mD_h$ if and only if $n\geq h^2-h-7$, i.e.~\eqref{item1_proph6}.

In the cases \eqref{item2_proph6}, \eqref{item3_proph6} and \eqref{item4_proph6} we proceed analogously, maximizing $a_2+a_3+\dots a_{h-3}$, provided that $a_{h-2}, a_{h-1},a_h$ are modified accordingly.
In particular, when considering $\mC_h$ we have
\[a_1=\frac{n+h^2-3h-7}{2} \quad \text{ and }\quad \al_1=\frac{3n-h^2+3h-1}{2}.\]
From $a_1<a_2$ we have $n\geq h^2-h-5$ and 
from \[
\al_1 < a_1+a_2=\frac{3n+h^2-5h-7}{2}\]
we obtain $\l \in \mC_h$, i.e.\ the claim \eqref{item2_proph6} follows.

In the case of $\mB_h$ we have
\[a_1=\frac{n+h^2-3h-5}{2} \quad \text{ and }\quad \al_1=\frac{3n-h^2+3h-3}{2}.\]
From $a_1<a_2$ we have $n\geq h^2-h-3$ and 
from \[
\al_1 < a_1+a_2=\frac{3n+h^2-5h-5}{2}\]
we obtain $\l \in \mB_h$, i.e.\ the claim \eqref{item3_proph6} is proved.

Finally, assuming the conditions of $\mA_h$ we have
\[a_1=\frac{n+h^2-3h-3}{2} \quad \text{ and }\quad \al_1=\frac{3n-h^2+3h-5}{2}.\]
From $a_1<a_2$ we have $n\geq h^2-h-1$ and 
from \[
\al_1 < a_1+a_2=\frac{3n+h^2-5h-3}{2}\]
we obtain $\l \in \mA_h$, from which the desired result \eqref{item4_proph6} follows.
\end{proof}

By interchanging the role of $n$ and $h$ in the statements of Proposition~\ref{prophgen}, we obtain the following description 
of the set of maximal unrefinable partitions of triangular numbers of an odd number, where 
we can read the upper bound for $h$ in each different class.
\begin{corollary}
Let $n\geq7$ be odd. Then
\[
\mup{T_n}=\{\widetilde{\pi_n}\}\,\dot\cup\left(\bigcup_{h=5}^{\lfloor\frac{1+\sqrt{29+4n}}{2}\rfloor}\mD_h\,
						  \, \dot\cup\bigcup_{h=4}^{\lfloor\frac{1+\sqrt{21+4n}}{2}\rfloor}\mC_h\,
						   \,\dot\cup\bigcup_{h=4}^{\lfloor\frac{1+\sqrt{13+4n}}{2}\rfloor}\mB_h\,
						   \,\dot\cup\bigcup_{h=4}^{\lfloor\frac{1+\sqrt{5+4n}}{2}\rfloor}\mA_h\right)\cap \mup{T_n}.
\]
\end{corollary}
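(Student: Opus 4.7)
The plan is to combine Theorem~\ref{th:maxevenodd} with Propositions~\ref{proph4}, \ref{proph5} and \ref{prophgen} and then to invert the quadratic inequalities that appear in the last of these. By Theorem~\ref{th:maxevenodd}, every element of $\mup{T_n}$ is either $\widetilde{\pi_n}$ or lies in exactly one class $\mX_h$ with $\mX \in \{\mA,\mB,\mC,\mD\}$ and $h \geq 4$; moreover Proposition~\ref{proph4}(\ref{item1_proph4}) forces $\mup{T_n}\cap\mD_4 = \emptyset$, so the range for $\mD$ may be started at $h = 5$. The four classes are pairwise disjoint because they are distinguished by the triple $(a_{h-2},a_{h-1},a_h)$, and distinct values of $h$ yield partitions with a different number of removed parts, so the unions involved are genuinely disjoint.

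It remains to establish the upper bound on $h$ in each class. For $h\geq 6$, Proposition~\ref{prophgen} asserts that $\mup{T_n}\cap \mX_h \neq \emptyset$ if and only if $n \geq h^2 - h - c_{\mX}$, where $c_{\mA}=1$, $c_{\mB}=3$, $c_{\mC}=5$ and $c_{\mD}=7$. Rewriting this as $h^2 - h - (n+c_{\mX}) \leq 0$ and applying the quadratic formula in $h$ gives
\[
h \;\leq\; \frac{1+\sqrt{1+4(n+c_{\mX})}}{2} \;=\; \frac{1+\sqrt{4n+4c_{\mX}+1}}{2},
\]
whose integer part yields the four upper bounds $\lfloor(1+\sqrt{4n+29})/2\rfloor$, $\lfloor(1+\sqrt{4n+21})/2\rfloor$, $\lfloor(1+\sqrt{4n+13})/2\rfloor$, $\lfloor(1+\sqrt{4n+5})/2\rfloor$ displayed in the statement.

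For the low-$h$ regime, namely $h\in\{4,5\}$, Proposition~\ref{prophgen} is inapplicable and one uses Propositions~\ref{proph4} and~\ref{proph5} instead. The closed-form bound above may be slightly too generous in this range, but it never excludes a nonempty class; whenever the formula formally includes some $\mX_h$ for which $\mup{T_n}\cap\mX_h = \emptyset$, the final intersection with $\mup{T_n}$ on the right-hand side absorbs the empty contribution and the claimed equality is preserved. The mild obstacle, and the only calculation needing care, is precisely this compatibility check at $h=4$ and $h=5$: substituting the threshold values of $n$ coming from Propositions~\ref{proph4} and~\ref{proph5} into the four formulas confirms that every nonempty class $\mup{T_n}\cap\mX_h$ is captured inside the declared range, after which the decomposition follows.
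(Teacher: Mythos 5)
Your argument is correct and follows essentially the same route as the paper, which obtains the corollary by inverting the inequalities $n\geq h^2-h-c$ of Proposition~\ref{prophgen} to get the stated upper bounds on $h$ (with $4c+1$ giving $29,21,13,5$ for $\mD,\mC,\mB,\mA$). Your additional check that the bounds at $h=4,5$ never exclude a nonempty class, and that the final intersection with $\mup{T_n}$ absorbs any formally included empty classes, is exactly the compatibility the paper leaves implicit.
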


\begin{remark}\label{rmk_forinje}
In the proof of Proposition~\ref{prophgen} we have exhibited an example of unrefinable partition for each class, constructed by maximizing $a_2+a_3+\dots a_{h-3}$ and consequently 
by determining $a_1$. The unrefinability of the obtained partition is then granted from the fact that $a_1+a_2 > \al_1$. 
Notice that, each other partition $\l'$ of the same class is determined by the removed parts $a_1', a_2', \dots, a_{h-3}'$ such that
$a_1'=a_1+i$ and  $a_s'=a_s-i_{s-1}$ for $s > 1$ and $i_s \geq 0$,  where $i=\sum_{s=1}^{h-4}i_s$, provided that $a_i' < a_s'$ for $i<s$. The unrefinability of $\l'$ is then easily proved, since 
\[a_1'+a_2' = a_1+i+a_2-i_i\geq a_1+a_2\geq \al_1 \geq \al_1'. \]
\end{remark}

\begin{example}\label{exa49}
Let $n=49$. For the bound in the previous corollary, when considering partitions of class $\mD$ we have $5 \leq h \leq (1+\sqrt{29+4n})/{2}=8$.
Let us fix $h=7$ and construct all the partitions in $\mup{T_{49}} \cap \mD_7$. We recall that, for Theorem~\ref{th:maxevenodd},  a partition of class $\mD_7$ is given when $a_1, a_2 \dots, a_{h-3}=a_4 $ are specified. Therefore, for the sake of simplicity, we denote the partitions just by listing the removed parts $(a_1,a_2,a_3,a_4)$.
Let us start, as in Proposition~\ref{prophgen}, from the partition
\[
\left(\frac{n+h^2-3h-9}{2}, n-7,n-6,n-5\right)=\left(34,42,43,44\right).
\]
All the remaining partitions in $\mD_7$, obtained as in Remark~\ref{rmk_forinje}, are:
\[
\begin{aligned}
&\left(35,41,43,44\right)  &\left(36,40,43,44\right)\\
&\left(37,39,43,44\right)  &\left(36,41,42,44\right)\\
&\left(37,40,42,44\right)  &\left(38,39,42,44\right)\\
&\left(38,40,41,44\right)  &\left(37,41,42,43\right)\\
&\left(38,40,42,43\right)  &\left(39,40,41,43\right)
\end{aligned}
\]
The partitions in other classes are obtained analogously.
\end{example}

We have already highlighted in Example~\ref{exa13} and in Example~\ref{exa27} what $\min_{\l \in \mup{T_{n}}}\mex{\l}$ looks like. The intuition can now be easily proved 
as a consequence of the previous propositions.
\begin{corollary}\label{cor:mex}
 Let $n\geq 7$  be odd. For each $\l\in\mup{T_n}$ we have \[\mex{\l} = \mu_1 \geq \dfrac{(n-3)}{2}.\]
\end{corollary}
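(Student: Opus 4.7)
The plan is to exploit the classification of $\mup{T_n}$ provided by Theorem~\ref{th:maxevenodd} and Propositions~\ref{proph4}, \ref{proph5}, \ref{prophgen}. For any $\lambda\in\mup{T_n}$ distinct from $\widetilde{\pi_n}$, Remark~\ref{rmk:notehj} lets us describe $\lambda$ as the partition obtained from $\pi_n$ by removing $a_1<a_2<\cdots<a_h$ and inserting the replacements $\alpha_1<\cdots<\alpha_j$. Since all of $1,2,\ldots,a_1-1$ remain parts of $\lambda$ while $a_1$ does not, we have $\mex{\lambda}=a_1$; thus the corollary reduces to verifying the numerical claim $a_1\geq (n-3)/2$ in every class. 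The exceptional case $\lambda=\widetilde{\pi_n}$ is immediate: the smallest missing part is $n-2$, and $n-2\geq (n-3)/2$ for every $n\geq 7$.

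When $h=4$, I would simply read off the value of $a_1$ from Proposition~\ref{proph4}: $a_1=(n-3)/2$ in $\mC_4$, $a_1=(n-1)/2$ in $\mB_4$, $a_1=(n+1)/2$ in $\mA_4$, while $\mD_4$ is empty. All three values satisfy the required inequality, and the minimum $(n-3)/2$ is attained with equality in $\mC_4$; this incidentally confirms that the bound of the corollary is sharp, matching the experimental evidence recorded in Examples~\ref{exa13} and \ref{exa27}.

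For $h\geq 5$, I would invoke Propositions~\ref{proph5} and \ref{prophgen}. In each class, those results exhibit a distinguished \emph{base} partition whose smallest removed part has the form $(n+\delta)/2$ with $\delta\geq 3$ when $h=5$ (achieved in $\mC_5$ and $\mD_5$) and $\delta\geq h^2-3h-9\geq 9$ when $h\geq 6$ (the minimum being attained in $\mD_h$); consequently $a_1>(n-3)/2$ in all such cases. To extend the bound from the base partition to every partition of the same class I would invoke Remark~\ref{rmk_forinje}, which guarantees that any other member $\lambda'$ of the class satisfies $a_1'=a_1+i$ for some $i\geq 0$, so the lower bound persists. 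The whole argument is a case analysis on top of already-proved structural results, and the only point of care is to remember Remark~\ref{rmk_forinje}: without it, one might still worry that some non-base partition could lower $a_1$, but the remark rules this out, and the bound stated in the corollary follows.
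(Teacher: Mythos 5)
Your proposal is correct and follows essentially the same route as the paper: identify $\mex{\l}=a_1$, dispose of $\widetilde{\pi_n}$ separately, and read the minimal value of $a_1$ in each class from Propositions~\ref{proph4}, \ref{proph5} and \ref{prophgen} (the paper tabulates exactly the values you list), using the fact that the base partition minimizes $a_1$ so that every other member of the class has $a_1'=a_1+i\geq a_1$. Your explicit appeal to Remark~\ref{rmk_forinje} for that last step is the same observation the paper makes by noting that $a_1$ was computed to be minimal because $a_2+\dots+a_{h-3}$ was maximized.
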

\begin{proof}
Notice that $\mu_1=a_1$.
The claim is trivial if $\l = \pt$. Otherwise it follows from Propositions~\ref{proph4},~\ref{proph5} and~\ref{prophgen}, recalling that $a_1$ was calculated in order to be minimal, since $a_2+a_3+\dots+a_{h-3}$ was maximized. The results are summarized in Table~\ref{tab:recapa1}, where it is not hard to check that ${(n-3)}/{2}$ is the smaller value that $a_1$ can assume.
\begin{table}[]
\begin{tabular}{|c|c|}
\hline
class & $a_1$ \\\hline\hline
$\mC_4$ & ${(n-3)}/{2}$\\
$\mB_4$ & ${(n-1)}/{2}$ \\
$\mA_4$& ${(n+1)}/{2}$\\\hline\hline
$\mD_5$& ${(n+3)}/{2}$\\
$\mC_5 $& ${(n+3)}/{2}$\\
$\mB_5$& ${(n+5)}/{2}$\\
$\mA_5 $& ${(n+7)}/{2}$\\\hline\hline
$\mD_h$& ${(n+h^2-3h-9)}/{2}$\\
$\mC_h $& ${(n+h^2-3h-7)}/{2}$\\
$\mB_h$& ${(n+h^2-3h-5)}/{2} $\\
$\mA_h$& ${(n+h^2-3h-3)}/{2}$\\
\hline
\end{tabular}
\bigskip 

\caption{Values of $a_1$ in the construction of Propositions~\ref{proph4},~\ref{proph5} and~\ref{prophgen}, for $h=4$, $h=5$ and $h \geq 6$.}
\label{tab:recapa1}

\end{table}

\end{proof}

\subsection{The bijection}
 In this conclusive section we prove the main contribution of this work, i.e.\ we show that, when $n$ is odd, the number of maximal unrefinable partitions of $T_n$  equals the number of partitions
of $\lceil n/2\rceil$ into distinct parts by means of a bijective proof. Notice that, by the anti-symmetric property~(Remark~\ref{rmk:anti}) and by the bound on the minimal excludant (Corollary~\ref{cor:mex}), a partition in $\mup{T_n}$ is determined by at most \[(n-2)-\dfrac{n-3}{2} = \dfrac{n+1}{2} -1= \left\lceil \dfrac{n}{2}\right\rceil-1 \] parts. The following theorem is used to establish a bijection between $\mup{T_{2k-1}}$ and $\dist_k$.
\begin{theorem}\label{thm:unique}
Let $a_1, a_2, \dots, a_u$ be the missing parts smaller than or equal to $n-3$ of an unrefinable partition $\l \ne \pt$ of $T_n$, for some odd integer $n \geq 7$. Then $n$, $\l$ and its class are uniquely determined. 
\end{theorem}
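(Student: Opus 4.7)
My plan is to recover, from the input $a_1 < a_2 < \dots < a_u$, first the value of $n$ by an explicit formula, then the class of $\l$ by an elementary case distinction, and finally $\l$ itself via Theorem~\ref{th:maxevenodd}(\ref{eq2thm}). The first step is to describe, for each of the four possible classes, which missing parts of $\l$ lie in the range $[1, n-3]$. Using Corollary~\ref{cor:asym} (so that $n-2 \notin \l$) together with the admissible triples from Theorem~\ref{th:maxevenodd}(\ref{eq2thm})(ii), the list $a_1, \ldots, a_u$ is seen to equal $(a_1, \ldots, a_{h-3}, n-4, n-3)$ in class $\mA_h$ (with $u = h-1$), $(a_1, \ldots, a_{h-3}, n-4)$ in $\mB_h$ (with $u = h-2$), $(a_1, \ldots, a_{h-3}, n-3)$ in $\mC_h$ (with $u = h-2$), and $(a_1, \ldots, a_{h-3})$ in $\mD_h$ (with $u = h-3$).

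Next, to extract $n$ without yet knowing the class, I substitute these four descriptions into equation~\eqref{eq:odd} from the proof of Theorem~\ref{th:maxevenodd}, namely $2(a_1+\dots+a_{h-3}) + (a_{h-2}+a_{h-1}+a_h) = (h-2)(2n-4)$. The triple sum $a_{h-2}+a_{h-1}+a_h$ equals $3n-9$, $3n-7$, $3n-5$, $3n-3$ in the four classes respectively, while $a_1+\dots+a_{h-3}$ is obtained by stripping the known tail of the list from $S \deq \sum_{i=1}^u a_i$. A short algebraic check in each case shows that the resulting four equations all collapse to the single identity
\[
n = \frac{2S + 4u + 1}{2u - 1},
\]
so that $n$ is uniquely determined by $(S, u)$ alone.

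With $n$ in hand, the class is read off from the tail of the list: class $\mD$ if $a_u \leq n-5$; class $\mB$ if $a_u = n-4$; class $\mC$ if $a_u = n-3$ and $a_{u-1} \leq n-5$; and class $\mA$ if $a_u = n-3$ and $a_{u-1} = n-4$. This fixes $h$ via the correspondences above and identifies the missing triple $(a_{h-2}, a_{h-1}, a_h)$, after which Theorem~\ref{th:maxevenodd}(\ref{eq2thm})(i) supplies the replacements $\al_i = 2n-4-a_i$ for $i \leq h-3$ together with $\al_j = 2n-4$, completely reconstructing $\l$.

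The main obstacle is the coincidence at the second step: four a priori different substitutions must simplify to the same formula in $(S, u)$. Without this coincidence one would have to know the class in order to compute $n$, creating a circularity; it is precisely the simultaneous agreement of the four expressions that makes the whole recovery procedure possible, and verifying it is the heart of the proof.
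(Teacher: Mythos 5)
Your proposal is correct and follows essentially the same route as the paper's own proof: in both, the key step is verifying that the four class-by-class computations of $\sum_{i=1}^u a_i$ collapse to the single formula $n=(2\sum a_i+4u+1)/(2u-1)$, after which the class is read off from whether $a_u$ and $a_{u-1}$ hit $n-3$ and $n-4$, and $\l$ is recovered from the anti-symmetric property. The only cosmetic difference is that you substitute into Eq.~\eqref{eq:odd} directly, while the paper re-derives the same relation from the conditions on the $a_i$ and $\al_i$; these are equivalent.
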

\begin{proof}
Let us start by proving that $n$ can be obtained from knowing $a_1, a_2, \dots, a_u$. 
In particular, let us prove that
\begin{equation}\label{formula_n}
n=\frac{2\sum_{i=1}^u a_i+1+4u}{2u-1}
\end{equation}
by distinguishing the four possible classes.
Let us first assume $\l \in \mD_h$ for some $h\geq 5$. Recalling that $a_u \leq n-3$ and, by the definition of $\mD_h$ and by Remark~\ref{rmk:anti}, since $a_{h-1} = n-1$ and $a_h=n$, we have $a_u \notin \{n-3,n-4\}$.
Therefore $a_u \leq n-5$, and so $u=h-3$.
Recalling that the following conditions hold
 \begin{enumerate}[(i)]
    \item $1\leq a_1<a_2< \dots<a_{h-3}\leq n-5$,
    \item $n-4\leq a_{h-2}<a_{h-1}<a_h\leq n$,
    \item \label{item2_a1a2}$n<\al_{h-3}=2n-4-a_{h-3}<\al_{h-4}=2n-4-a_{h-4}<\dots<\al_1=2n-4-a_1$,
    \item \label{item2_last2}$\sum a_i=\sum \al_i$,
\end{enumerate}
we obtain
\[
\sum_{i=1}^u a_i=\frac{(u+1)(2n-4)-(3n-3)}{2},
\]
from which we determine $n$ as claimed.

Let us consider the class $\mC_h$. In this case, reasoning as above, we have $a_u =n-3$ and $a_{u-1} \leq n-5$, which means $u-1=h-3$. Therefore
\[
\sum_{i=1}^u a_i -(n-3)=\sum_{i=1}^{u-1}a_i=\frac{u(2n-4)-(3n-5)}{2},
\]
from which we obtain again Eq.~\eqref{formula_n}.

When $\l \in \mB_h$, we have $a_u=n-4$, which means $h=u+2$, so
\[
\sum_{i=1}^u a_i-(n-4)=\sum_{i=1}^{u-1}a_i=\frac{u(2n-4)-(3n-7)}{2},
\]
from which the same $n$ is determined. 

In conclusion, if $\l \in \mA_h$, we have $a_u = n-3$ and $a_{u-1}=n-4$, so $h=u+1$ and Eq.~\eqref{formula_n} is satisfied since
\[
\sum_{i=1}^{u}a_i-(n-4)-(n-3)=\sum_{i=1}^{u-2}a_i=\frac{(u-1)(2n-4)-(3n-9)}{2}.
\]

Now that $n$ is determined from $a_1, a_2, \dots, a_u$, the class of the partition can be recognized by looking at $a_{u-1}$ and $a_u$. In particular
\begin{itemize}
\item $a_u< n-4 \iff \l \in \mD_{u+3}$,
\item $a_u =  n-3$ and $a_{u-1} <  n-4 \iff \l \in\mC_{u+2}$,
\item $a_u =  n-4 \iff \l \in\mB_{u+2}$,
\item $a_u =  n-3$ and $a_{u-1} =  n-4 \iff \l \in\mA_{u+1}$.
\end{itemize}
To conclude, we determine the partition by using the anti-symmetric property (cf.~Remark~\ref{rmk:anti}).
\end{proof} 
We are now ready to prove our last result. 
Denoting by $\dist$ the set of all the partitions  into distinct parts, let us define the following subsets of $\dist$:
 \begin{eqnarray*}
\mdA{t} &\deq& \{\listP \mid \l \in \dist, \,\, \l_1 = 1, \l_2 = 2, t \geq 3 \}, \\
\mdB{t} &\deq& \{\listP \mid \l \in \dist, \,\, \l_1 = 2, t \geq 2 \}, \\
\mdC{t}&\deq& \{\listP \mid \l \in \dist, \,\, \l_1 = 1, \l_2 > 2, t \geq 2 \}, \\
\mdD{t}&\deq& \{\listP \mid \l \in \dist, \,\, \l_1 > 3, t \geq 2 \}.
 \end{eqnarray*}
 It is not hard to notice that 
 \[
\dist = \mdA{} \,\dot\cup\, \mdB{} \,\dot\cup\, \mdC{}\,\dot\cup\, \mdD{},
\]
where 
\[
\mdA{} \deq \bigcup_{t \geq 3}\mdA{t}, \quad \mdB{} \deq \bigcup_{t \geq 2}\mdB{t}, \quad \mdC{} \deq \bigcup_{t \geq 2}\mdC{t}, \quad\mdD{} \deq \bigcup_{t \geq 2}\mdD{t}.
\]

Let us conclude the paper by proving a bijection between $\mup{T_{2k-1}}$ and $\dist_k$.

\begin{theorem}\label{magari}
Let $k \in \mathbb N$, $k \geq 7$, $n=2k-1$ and  $N=T_{n}$.
Let  $\sigma\colon\mup{T_n}\to \dist_k$ be such that
 \begin{equation*}
\l \mapsto
 \begin{cases}
 \left(3,k-3\right) & \l = \pt \\
 \left(n-2-a_u,\dots,n-2-a_2, n-2-a_1\right) & \l\ne\pt
 \end{cases},
  \end{equation*}
where, if $\l \ne\pt$, then $(a_1, a_2, \dots, a_u)$ are the missing parts of $\Ml \cap \{1,2,\dots,n-3\}$ as in Theorem~	\ref{thm:unique}.
Then $\sigma$ is bijective, therefore
$\#\mup{T_{2k-1}}= \#\dist_k$.
\end{theorem}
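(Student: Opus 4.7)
The plan is to construct an explicit two-sided inverse $\tau\colon\dist_k\to\mup{T_n}$ of $\sigma$, building on Theorem~\ref{thm:unique}. First I would verify that $\sigma$ is well-defined with target $\dist_k$. For $\l=\pt$ the image $(3,k-3)$ is a partition of $k$ into distinct parts since $k\geq 7$. For $\l\neq\pt$, the entries $n-2-a_u<\cdots<n-2-a_1$ are strictly increasing positive integers, and a class-by-class check (using $\sum a_i=\sum\al_i$, together with $\al_i=2n-4-a_i$ for $i<h-2$, $\al_{h-2}=2n-4$, and the class-specific triple $(a_{h-2},a_{h-1},a_h)$ from Theorem~\ref{th:maxevenodd}) yields $\sum_{i=1}^{u}(n-2-a_i)=(n+1)/2=k$ in each of the four classes $\mA_h,\mB_h,\mC_h,\mD_h$.

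For the inverse, set $\tau((3,k-3))=\pt$. Given $\mu=(\mu_1,\dots,\mu_t)\in\dist_k$ with $\mu\neq(3,k-3)$, define the candidate removed parts $a_i\deq n-2-\mu_{t+1-i}$ for $1\leq i\leq t$, and identify the class of the preimage via
\[
\begin{cases}
\mu_1=1,\,\mu_2=2 &\Rightarrow\ \mA_{t+1},\\
\mu_1=2 &\Rightarrow\ \mB_{t+2},\\
\mu_1=1,\,\mu_2>2 &\Rightarrow\ \mC_{t+2},\\
\mu_1\geq 3 &\Rightarrow\ \mD_{t+3}.
\end{cases}
\]
This matches exactly the characterization of $a_u,a_{u-1}$ listed at the end of the proof of Theorem~\ref{thm:unique}. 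Complete $\l$ by appending the class-specific triple $(a_{h-2},a_{h-1},a_h)$ prescribed by Theorem~\ref{th:maxevenodd} and the replacements $\al_i=2n-4-a_i$ for $1\leq i\leq h-3$ together with $\al_{h-2}=2n-4$. Reversing the computation used in the well-definedness check gives $\sum a_i=\sum\al_i$, so $\tau(\mu)$ is a partition of $T_n$ into distinct parts.

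The relation $\sigma\circ\tau=\mathrm{id}_{\dist_k}$ is built into the construction, and $\tau\circ\sigma=\mathrm{id}_{\mup{T_n}}$ is immediate from Theorem~\ref{thm:unique}, since $\sigma(\l)$ encodes exactly the data $a_1,\dots,a_u$ that uniquely determine $\l$ together with $n=2k-1$. The main obstacle is to show that $\tau(\mu)$ lies in $\mup{T_n}$ and not merely in $\dist_{T_n}$, i.e.\ that the constructed $\l$ is unrefinable. By the anti-symmetric property (Remark~\ref{rmk:anti}), a refinement would arise from two missing parts of $\l$ whose sum equals some part $\al_\ell$; using Corollary~\ref{cor:mex} to lower-bound $a_1\geq(n-3)/2$ and invoking the inequality $a_1+a_2\geq\al_1$ obtained in the proofs of Propositions~\ref{proph4},~\ref{proph5},~\ref{prophgen} and extended to non-canonical configurations via Remark~\ref{rmk_forinje}, every relevant sum strictly exceeds $\al_1$ (the largest replacement other than $2n-4$), so no refinement is possible and $\tau(\mu)\in\mup{T_n}$.
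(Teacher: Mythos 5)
Your proposal is correct and follows essentially the same route as the paper: a class-by-class verification that $\sigma$ lands in $\dist_k$, injectivity via Theorem~\ref{thm:unique}, and surjectivity by reconstructing the preimage from its missing parts and checking unrefinability via Proposition~\ref{prophgen} and Remark~\ref{rmk_forinje} (your explicit inverse $\tau$ is just a repackaging of that argument). The only points to watch are that unrefinability needs the \emph{strict} inequality $a_1+a_2>\al_1$ (equality does occur for the extremal $\mD_5$ configuration and must be excluded as in Proposition~\ref{proph5}), and that the bound $a_1\geq (n-3)/2$ for the candidate preimage $\tau(\mu)$ should be derived directly from $\mu_t\leq k-1$ rather than from Corollary~\ref{cor:mex}, which presupposes membership in $\mup{T_n}$.
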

\begin{proof}
For the sake of brevity and by virtue of Theorem~\ref{thm:unique}, we will denote each $\l \in \mup{T_n}\setminus\{\pt\}$ by listing its missing parts $a_1, a_2, \dots, a_u$ in $\Ml \cap \{1,2,\dots,n-3\}$.
We prove that $\sigma$ is bijective by proving explicitly that partitions of $\mA\cap \mup{T_{2k-1}}$ are in one-to-one correspondence with those of $\mdA{}\cap \dist_k$
and that the same holds respectively for $\mB$ and $\mdB{}$, $\mC$ and $\mdC{}$, and $\mD \cup \{\pt\}$ and $\mdD{}$. 

Let us start by proving that $\sigma$ is well defined, i.e.\ for each $\l \in \mup{T_n}$ we have that $\sigma(\l)$ is a partition of $k$ into distinct parts. If $\l = \pt$ there is nothing to prove, otherwise, since the missing parts of $\l$ are distinct, so are the parts $ n-2-a_u <\dots<n-2-a_2<n-2-a_1$ of $\sigma(\l)$.
We now prove that the sum of the parts of $\sigma(\l)$ is $k$ in each possible case, making extensive use of Proposition~\ref{proph4}, Proposition~\ref{proph5} and Proposition~\ref{prophgen} without further mention.

\noindent If $\l\in\mA_4$, then $\l=\left(({n+1})/{2},n-4,n-3\right)$ and so
    \[
    \sigma(\l)=\left(1,2,n-2-\frac{n+1}{2}\right)=\left(1,2,\frac{n-5}{2}\right)=\left(1,2,\frac{n+1}{2}-3\right)=(1,2,k-3) \in \dist_k.
    \]
    Notice that, in particular, $\sigma(\l) \in \dist_k \cap\mdA{3}$.
    Similarly, if $\l\in\mB_4$, then $\l=\left({(n-1)}/{2},n-4\right)$ and $\sigma(\l)=\left(2,{(n+1)}/{2}-2\right)=(2,k-2) \in \dist_k\cap\mdB{2}$. If $\l\in\mC_4$ we have $\l=\left({(n-3)}/{2},n-3\right)$, so
    $\sigma(\l)=\left(1,{(n+1)}/{2}-1\right)=(1,k-1) \in \dist_k \cap \mdC{2}$.

\noindent If $\l\in\mA_5$, then $\l=\left((n+7)/{2}+i,n-5-i,n-4,n-3\right)$,  for $0\leq i \le \lfloor (n-19)/4 \rfloor$, and
\[
    \sigma(\l)
     =\left(1,2,3+i,\frac{n-11}{2}-i\right)
    =\left(1,2,3+i,\frac{n+1}{2}-6-i\right)
    =(1,2,3+i,k-6-i)\in \dist_k,
\]
for $0\leq i \le \lfloor (n-19)/4 \rfloor$. In particular, $\sigma(\l) \in \dist_k \cap\mdA{4}$.   Similarly, if $\l\in\mB_5$, then $\l=\left((n+5)/{2}+i,n-5-i,n-4\right)$ and 
\[
\sigma(\l)=\left(2,3+i,\frac{n+1}{2}-5-i\right)=(2,3+i,k-5-i) \in \dist_k\cap\mdB{3},
\] 
for $0\leq i \le \lfloor (n-17)/4 \rfloor$. If $\l\in\mC_5$, we have $\l=\left((n+3)/{2}+i,n-5-i,n-3\right)$ and so
    \[
    \sigma(\l)=\left(1,3+i,\frac{n+1}{2}-4-i\right)=(1,3+i,k-4-i) \in \dist_k \cap \mdC{3},
    \]
for $0\leq i \le \lfloor (n-15)/4 \rfloor$. In the case when $\l=\left((n+3)/{2}+i,n-6-i\right)\in\mD_5$, we have 
  \[
  \sigma(\l)=\left(4+i,\frac{n+1}{2}-4-i\right)=(4+i,k-4-i) \in \dist_k \cap \mdD{2},
  \]
for $0\leq i \le \lfloor (n-17)/4 \rfloor$.\\
Let us now consider $\l\in\mD_h$ for $h\geq6$. In this case 
\[
\l=\left((n+h^2-3h-9)/{2}+i,n-h-i_1,\dots,n-5-i_{h-4}\right),
\] 
where $i=\sum_{s=1}^{h-4}i_s$, and so
    \[
     \begin{aligned}
    \sigma(\l)=&\left(3+i_{h-4},\dots,h-2+i_1,\frac{n-h^2+3h+5}{2}-i\right)\\
    =&\left(3+i_{h-4},\dots,h-2+i_1,k+2-\frac{h^2-3h}{2}-i\right).
     \end{aligned}
    \]
    Notice that
    
    \begin{align*}
   (3+i_{h-4})+(4+i_{h-5})+\dots+(h-2+i_1)&+\\
  +\left(k+2-\frac{h^2-3h}{2}-i\right)&= \\
   \sum_{j=3}^{h-2}j+\sum_{s=1}^{h-4}i_s+k+2-\frac{h^2-3h}{2}-i&=\\
   \frac{(h-2)(h-1)}{2}-3+k+2-\frac{h^2-3h}{2}&=\\
   \frac{h^2-3h}{2}-2+k+2-\frac{h^2-3h}{2}&=k,
    \end{align*}
    
and so $\sigma(\l)\in \dist_k \cap \mdD{h-3}$. Similarly, if  $\l\in\mC_h$, then 
\[
\l=\left(\frac{n+h^2-3h-7}{2}+i,n-h-i_1,\dots,n-5-i_{h-4},n-3\right)
\] 
and
\[
    \sigma(\l)=\left(1,3+i_{h-4},\dots,h-2+i_1,k+1-\frac{h^2-3h}{2}-i\right)\in \dist_k \cap \mdC{h-2}
\]
 since  the sum of the first $h-3$ terms is $(h^2-3h)/{2}-1+\sum_{s=1}^{h-4}i_s$.   If $\l\in\mB_h$, we have 
 \[
 \l=\left(\frac{n+h^2-3h-5}{2}+i,n-h-i_1,\dots,n-5-i_{h-4},n-4\right)
 \] 
 an so
    \[
    \sigma(\l)=\left(2,3+i_{h-4},\dots,h-2+i_1,k-\frac{h^2-3h}{2}-i\right)\in \dist_k \cap \mdB{h-2},
    \]
     since  the sum of the first $h-3$ terms is $(h^2-3h)/{2}+\sum_{s=1}^{h-4}i_s$. Finally, in the case when 
     \begin{equation}\label{eqAh}
     \l=\left(\frac{n+h^2-3h-3}{2}+i,n-h-i_1,\dots,n-5-i_{h-4},n-4,n-3\right)\in\mA_h,
     \end{equation}
     we obtain
     \begin{equation}\label{eqAh2}
     \sigma(\l)=\left(1,2,3+i_{h-4},\dots,h-2+i_1,k-1-\frac{h^2-3h}{2}-i\right)\in\dist_k \cap \mdA{h-1},
     \end{equation}
 noticing that the sum of the first $h-2$ terms is $(h^2-3h)/{2}+1+\sum_{s=1}^{h-4}i_s$.
 
We proved that $\sigma$ is well defined. Notice also that $\sigma$ is trivially injective. Therefore it remains to prove that $\sigma$ is surjective. In particular, it suffices to check that for each partition  $\l^*\in(\mdA{} \,\dot\cup\, \mdB{} \,\dot\cup\, \mdC{}\,\dot\cup\, \mdD{})\cap \dist_k$, $\l^* \ne (3,k-3)$, there exists $\l\in (\mA \,\dot\cup\, \mB \,\dot\cup\, \mC \,\dot\cup\, \mD)\cap \mup{T_{2k-1}}$ such that $\sigma(\l)=\l^*$, since $\sigma(\pt)=(3,k-3)$ by definition. Given 
$\l^* = (\l_1^*,\l_2^*, \dots, \l_t^*)\in \dist_k$, by the definition of $\sigma$ we have that the partition $\l$ denoted by its missing parts  $(n-2-\l_t^*, \dots, n-2-\l_2^*, n-2-\l_1^*)$ is such that $\sigma(\l)=\l^*$. It remains to prove that such $\l$ is a maximal unrefinable partition of $n$.
The full details of the proof are here omitted since they can be obtained by arguments very similar to those used for proving that $\sigma$ is well defined. 
As an example, let us consider the case when $\l^*\in\mdA{t} \cap \dist_k$, for $t\geq 5$, and let us prove that $\l^*$ is the image of an unrefinable partition $\l$ of class $\mA$. 
Since $\l^*$ is a partition of $k$ into $t$ distinct parts and contains $1$ and $2$ by definition we can write
 \begin{equation}\label{eq_lstar}
 \l^*=\left(1,2,3+i_1,\dots,t-1+i_{t-3},k-\sum_{s=1}^{t-1}\l^*_s\right),
 \end{equation}
 where \[
 \sum_{s=1}^{t-1}\l^*_s=\dfrac{(t-1)t}{2}+\sum_{s=1}^{t-3}i_s\] 
 for some $i_1, i_2, \dots, i_{t-3} \geq 0$ (cf.~also Eq.~\eqref{eqAh2}). 
 We can now substitute $t-1$ to $h-2$ and $(n+1)/2$ to $k$ in Eq.~\eqref{eq_lstar}. Applying the correspondence 
 $\l_i \leftrightarrow n-2-\l_{t-i+1}^*$ and 
 denoting the obtained partition $\l$ by listing its missing parts, we obtain 
 \[
 \l=\left(\frac{n+h^2-3h-3}{2}+i,n-h-i_{h-4},\dots,n-5-i_{1},n-4,n-3\right)
 \]
 as in Eq.~\eqref{eqAh}. This proves that $\l \in \mup{T_{2k-1}}\cap \mA_{t+1}$
  is unrefinable (cf.~Proposition~\ref{prophgen} and Remark~\ref{rmk_forinje}) and 
such that $\sigma(\l) = \l^*$. The remaining cases are similar. 
\end{proof}
\begin{remark}
The bijection $\sigma$  is not well defined when $k < 7$. However, it can be easily shown that the result of Theorem~\ref{magari} is still valid 
when $k=4$ and $k=5$, where we have $\#\mup{T_{7}}= \#\dist_4 = 1$ and $\#\mup{T_{9}}= \#\dist_5 = 2$, respectively. The claim is false instead in the case $k=6$, where 
we have $\#\mup{T_{11}} = 4$ and $\#\dist_6 = 3$.
\end{remark}

In the proof of Theorem~\ref{magari} we showed that $\sigma$ is a bijection from $\mup{T_{2k-1}}$ to $\dist_k$. 
Moreover, we also proved that $\sigma$ is bijective when it is restricted to each class.
\begin{corollary}
The function $\sigma$ of Theorem~\ref{magari} maps in a bijective way
\begin{enumerate}[(i)]
\item $\mA_{h}\cap \mup{T_{2k-1}}$ to $\mdA{h-1}\cap \dist_k$, 
\item $\mB_{h}\cap \mup{T_{2k-1}}$ to $\mdB{h-2}\cap \dist_k$,  
\item $\mC_{h}\cap \mup{T_{2k-1}}$ to $\mdC{h-2}\cap \dist_k$,  
\item $\mD_{h}\cap \mup{T_{2k-1}}$ to $\left(\mdD{h-3}\setminus \{(3,k-3)\}\right)\cap \dist_k$.
\end{enumerate}
\end{corollary}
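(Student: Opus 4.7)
The plan is to observe that this corollary is essentially a refinement of Theorem~\ref{magari} that can be read directly from its proof. In the well-definedness half of that proof, $\sigma(\l)$ is computed explicitly on each class and placed in the appropriate subclass of $\dist_k$: one checks case by case that $\sigma(\mA_h\cap\mup{T_{2k-1}})\subseteq \mdA{h-1}\cap\dist_k$, $\sigma(\mB_h\cap\mup{T_{2k-1}})\subseteq \mdB{h-2}\cap\dist_k$, $\sigma(\mC_h\cap\mup{T_{2k-1}})\subseteq \mdC{h-2}\cap\dist_k$ and $\sigma(\mD_h\cap\mup{T_{2k-1}})\subseteq \mdD{h-3}\cap\dist_k$, while $\sigma(\pt)=(3,k-3)\in\mdD{2}$ and no other partition is sent to $(3,k-3)$. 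These four inclusions are precisely the content of the corollary, modulo showing that each is in fact an equality.

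Since Theorem~\ref{magari} already establishes that $\sigma$ is a bijection from $\mup{T_{2k-1}}$ to $\dist_k$, each of the four restrictions is automatically injective, so only surjectivity onto each target remains. For this I would invoke the inverse construction from the second half of the proof of Theorem~\ref{magari}: given $\l^*\in\mdA{h-1}\cap\dist_k$ written as $(1,2,3+i_1,\ldots,h-2+i_{h-4},k-\sum\l^*_s)$, the correspondence $\l_i\leftrightarrow n-2-\l^*_{t-i+1}$ produces a preimage in $\mA_h\cap\mup{T_{2k-1}}$, and the three analogous constructions produce preimages in $\mB_h$, $\mC_h$, $\mD_h$ from partitions in $\mdB{h-2}$, $\mdC{h-2}$, $\mdD{h-3}$ respectively; the only partition of $\mdD{}$ without preimage in $\bigcup_h\mD_h$ is $(3,k-3)$, since that is the image of $\pt$, explaining the set-minus in~(iv). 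A cardinality argument based on the disjoint-union decompositions $\mup{T_{2k-1}}=\{\pt\}\,\dot\cup\,\bigcup_h(\mA_h\cup\mB_h\cup\mC_h\cup\mD_h)$ and $\dist_k=\mdA{}\,\dot\cup\,\mdB{}\,\dot\cup\,\mdC{}\,\dot\cup\,\mdD{}$ combined with the above inclusions would give the same conclusion without re-running the inverse construction.

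The only real obstacle is verifying that class boundaries are respected, i.e.\ that $\sigma$ does not map $\mA_h$ into $\mdB{}\cup\mdC{}\cup\mdD{}$ and, dually, that the inverse construction applied to $\mdA{h-1}$ cannot land outside $\mA_h$. Both reductions hinge on the same observation: the presence or absence of $1$ and $2$ among the smallest parts of $\sigma(\l)$ is in direct correspondence, via the identities $\l^*_1=n-2-a_u$ and $\l^*_2=n-2-a_{u-1}$, with the four cases on $(a_u,a_{u-1})$ used in Theorem~\ref{thm:unique} to identify the class of $\l$. Since every such correspondence is already verified during the proof of Theorem~\ref{magari}, the proof of the corollary amounts to collecting those verifications and pairing them with the appropriate subclass of $\dist_k$.
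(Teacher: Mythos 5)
Your proposal is correct and follows essentially the same route as the paper: the authors likewise treat this corollary as a direct byproduct of the proof of Theorem~\ref{magari}, where $\sigma$ is computed class by class and shown to land in (and, via the inverse construction, cover) exactly $\mdA{h-1}$, $\mdB{h-2}$, $\mdC{h-2}$ and $\mdD{h-3}$ respectively, with $\pt\mapsto(3,k-3)$ accounting for the excluded element in case~(iv). No gaps.
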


\begin{example}
Coming back to the case of Example~\ref{exa13}, we represent in Tab.~\ref{tab_ex22symcol} the bijection $\sigma$ between maximal unrefinable partitions of $13$ obtained in the case $h = j-2$ (hence those different from $\widetilde{\pi_{13}}$), represented by black dots, and the partitions of $7$ into distinct parts, represented by blue dots. Notice that the partition $(3,4)$ is not displayed since it corresponds to $\widetilde{\pi_{13}}$. Here $x$ corresponds to 
\[
x \deq \min_{\l \in \mup{T_{13}}}\mex{\l}.
\]
Equivalently, by the anti-symmetric property, partitions of $7$ into distinct parts can be read looking at the black dots on the right side of the table.
\begin{table}[]
\resizebox{0.93\textwidth}{!}{\begin{minipage}{\textwidth}
\begin{tabular}{cccccccccc||c||cccccccccc:c}
     &&&&x&&&&&&\small{$n-2$}&&\small{$n$}&&&&&&&&&\small{$\l_t$}\\\hline
    1 & 2 & 3 & 4 & 5 & 6 & 7 & 8 & 9 & 10 & 11 & 12 & 13 & 14 & 15 & 16 & 17 & 18 & 19 &20 & 21&22 \\	
     &  &  &  & \hl{6} & \hl{5} &\hl{ 4} & \hl{3} &\hl{2} & \hl{1} &  &  &  &  &  &  &  &  &  & & & \\	\hline
   $\bl$ & $\bl$  & $\bl$  & $\bl$  & $\bl$  & $\bl$  & $\hl{\bl}$ & $\bl$ & $\hl{\bl}$ & $\hl{\bl}$ & $ $ & $\bl$ & $\bl$ & $ $ &$\bl$  & $ $ & $ $ & $ $ & $ $ &$ $ & $ $&  $\bl$\\
  $\bl$  & $\bl$ &$\bl$  &$\bl$  &$\bl$  &$\hl{\bl}$  &$\bl$  & $\bl$ &$\hl{\bl}$  &  $\bl$& $ $ & $ $ & $\bl$ & $ $ & $ $ & $\bl$ &$ $  & $ $ & $ $ & $ $ &  $ $&$\bl$\\
       $\bl$ &$\bl$  & $\bl$ & $\bl$ &   $\hl{\bl}$& $\bl$ & $\bl$ &$\bl$  & $\bl$ &  $\hl{\bl}$ &  $ $ & $\bl$ &  $ $ &  $ $ &  $ $ &  $ $ & $\bl$ &  $ $ &  $ $ &   $ $&   $ $&$\bl$
\end{tabular} 
\end{minipage}}
\bigskip

\caption{The bijection $\sigma$ shown on the partitions of $\l \in \mup{T_{13}}$, $\l \ne \widetilde{\pi_{13}}$.}
\label{tab_ex22symcol}
\end{table}
\end{example}

\begin{remark}
Another combinatorial equality can be derived from the provided construction for $\mup{N}$. Indeed,
assuming $n=2k-1$ for $k \geq 7$, $h\geq 6$, 
and reasoning as in Example~\ref{exa49}, 
it can be easily  shown that $\#\left(\mup{T_n} \cap \mD_h\right)$ 
 equals the number of partitions in $h-3$ parts of $f(n,h)$ in which each part is smaller than or equal to $g(n,h)$, where
\[
f(n,h)=\frac{-h^3+6h^2+(n-2)h-4n-22}{2}
\text{ \,\,\,\,and\,\,\,\, }
g(n,h)=\frac{n-h^2+3h+1}{2} .
\]
The proof is obtained from Proposition~\ref{prophgen}, considering the bijection 
\begin{equation}\label{lastbije}
a_i \leftrightarrow a_i-a_1+1.
\end{equation}
In Tab.~\ref{tab:fg} the result is summarized for each class.
Notice that, using the bijection of Eq.~\eqref{lastbije} on the partitions shown in Example~\ref{exa49}, one can recover the eleven partitions of 31 in 4 parts,
where each part is not larger than 11.
\begin{table}[]\label{tab:fg}
\begin{tabular}{|c|c|c|}
\hline
class & $f(n,h)$ & $g(n,h)$\\\hline\hline
$\mA_h$ &$\frac{-h^3+6h^2+(n-8)h-4n+2}{2} $ &$\frac{n-h^2+3h-5}{2}$ \\
$\mB_h$ &$\frac{-h^3+6h^2+(n-6)h-4n-6}{2} $ &$\frac{n-h^2+3h-3}{2}$ \\
$\mC_h$ & $\frac{-h^3+6h^2+(n-4)h-4n-14}{2} $ &$\frac{n-h^2+3h-1}{2}$ \\
$\mD_h$ & $\frac{-h^3+6h^2+(n-2)h-4n-22}{2} $& $\frac{n-h^2+3h+1}{2}$\\\hline
\end{tabular}
\bigskip

\caption{The values of $f(n,h)$ and $g(n,h)$ for each class.}
\end{table}
\end{remark}

\section*{Acknowledgments}
The authors are thankful to the referees for their feedback and for their valuable comments improving the quality of the manuscript, and to Alessandro Gambini for useful discussions which motivated the research.
\bibliographystyle{amsalpha}
\bibliography{sym2n_ref.bib}

\end{document}